\newtheorem{thm}{Theorem}[section]
\newtheorem{fct}[thm]{Fact}
\newtheorem{cor}[thm]{Corollary}
\newtheorem{exmp}[thm]{Example}
\newtheorem{rmrk}[thm]{Remark}
\newtheorem{cons}[thm]{Construction}
\newtheorem{historic}[thm]{Historic Remark}
\newtheorem{conv}[thm]{Convention}
\newtheorem{prop}[thm]{Proposition}
\newtheorem{qus}{Question}
\newtheorem{lem}[thm]{Lemma}
\newtheorem{prob}{Problem}
\newtheorem{defn}{Definition}
\newcounter{contenumi}
\def\and{\mathrel{\&}}
\def\MM{{\revmathfont{M}}}
\newcommand{\revmathfont}[1]{{\textsf{#1}}}
\def\KP{\revmathfont{KP}}
\def\ZFC{\revmathfont{ZFC}}
\def\Inf{\revmathfont{Inf}}
\def\VeL{\revmathfont{V}=\revmathfont{L}}
\def\implies{\Rightarrow}
\title{Strong reducibilities and set theory}
\author{Noah Schweber}
\thanks{The author thanks Antonio Montalb\'an for many helpful conversations, as well as the anonymous referee for their helpful report. Part of this work appeared in chapter $8$ of the author's Ph.D. thesis \cite{Sch16}. The author was partially supported by NSF grant MSPRF-1606455.}				
\begin{document}
\maketitle

\begin{abstract}  We study Medvedev reducibility in the context of set theory --- specifically, forcing and large cardinal hypotheses. Answering a question of Hamkins and Li \cite{HaLi}, we show that the Medvedev degrees of countable ordinals are far from linearly ordered in multiple ways, our main result here being that there is a club of ordinals which is an antichain with respect to Medvedev reducibility. We then generalize these results to arbitrary ``reasonably-definable" reducibilities, under appropriate set-theoretic hypotheses.

We then turn from ordinals to general structures. We show that some of the results above yield characterizations of counterexamples to Vaught's conjecture; another applies to all situations, assigning an ordinal to any reasonable class of structures and ``measure" on that class. We end by discussing some directions for future research.
\end{abstract}

\tableofcontents

\section{Introduction}

If $A, B$ are two sets of natural numbers, we say $A$ is Turing reducible to $B$ if there is an algorithm for computing the characteristic function of $A$ with calls to the characteristic function of $B$; that is, if $\Phi_e^B=A$ for some Turing machine $\Phi_e$. Turing reducibility is generally believed to capture relative computability: if $A\le_T B$, then $B$ provides enough information to compute $A$, and (more importantly) conversely.

Extending this, for {\em structures } $\mathcal{A}$ and $\mathcal{B}$ we should say that $\mathcal{A}$ is reducible to $\mathcal{B}$ if we can ``effectively" build $\mathcal{A}$ from $\mathcal{B}$. This idea has multiple issues, however. First, there is no obvious sense in which a Turing machine can act directly on an abstract structure. We get around this by focusing on {\em copies}:

\begin{defn} A {\em copy} of a structure $\mathcal{A}$ is an isomorphic structure with domain $\omega$. 
\end{defn}

(All languages are assumed to be countable.)

We now intend to say that $\mathcal{A}$ is simpler than $\mathcal{B}$ if we can effectively build a {\em copy } of $\mathcal{A}$ from any {\em copy } of $\mathcal{B}$. This is much more satisfactory, as it is meaningful to feed a copy of a structure to a Turing machine as an oracle. However, this definition is still ambiguous: what kind of {\em uniformity } do we demand in the reduction? There are two opposite ``poles:"\footnote{Of course, these are not the only natural reducibility notions: {\em Medvedev-mod-parameters } is another natural reducibility notion, and turns out to lie {\em strictly between } strong and weak reducibility \cite{Kal12}. This refutes a natural (and apparently common) guess coming from the frequent utility of generic structures, since an easy forcing argument shows that whenever $\mathcal{A}\le_w\mathcal{B}$ there is some tuple $\overline{c}\in\mathcal{B}$ and $e\in\omega$ such that $e$ is a reduction from $B$ to a copy of $\mathcal{A}$ whenever $B$ is a {\em sufficiently generic } copy of $(\mathcal{B},\overline{c})$; Kalimullin's result shows that while the phrase ``sufficiently generic" can often be removed from arguments of this type, it is essential in this case. The question of when Muchnik and Medvedev-mod-parameters coincide in general is wide open.}
\begin{itemize}

\item {\bf (Muchnik (weak) reducibility) } $\mathcal{A}\le_w\mathcal{B}$ if for each copy $B$ of $\mathcal{B}$, there is some $e$ such that $\Phi_e^B$ is a copy of $\mathcal{A}$.

\item {\bf (Medvedev (strong) reducibility) } $\mathcal{A}\le_s\mathcal{B}$ if there is some $e\in\omega$ such that for each copy $B$ of $\mathcal{B}$, we have $\Phi_e^B\cong\mathcal{A}$.

\end{itemize} 

Muchnik reducibility has been studied extensively, and tends to be reasonably behaved. For instance, the foundational papers in $\alpha$-recursion theory led to a complete understanding of the Muchnik degrees of ordinals:

\begin{thm}[Sacks \cite{Sac76}, following \cite{Kri64}/\cite{Pla66}] Let $\alpha$ be an infinite ordinal. Then for an ordinal $\beta$, the following are equivalent:\footnote{Here, and throughout this paper, we conflate the ordinal $\eta$ with the linear order $(\eta, <)$.}\begin{itemize}

\item $\beta\le_w\alpha$.

\item $\beta$ is less than the least $\gamma>\alpha$ such that $L_\gamma\models$\KP+\Inf. {\em (This $\gamma$ is usually denoted by ``$\alpha^+$," but we will use ``$\omega_1^{CK}(\alpha)$" here to avoid clashing with the notation for the successor cardinal.)}

\end{itemize}
\end{thm}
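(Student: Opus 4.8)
The plan is to reduce Muchnik reducibility of ordinals to relativized admissibility and then compute a minimum. Since a copy of $\alpha$ exists only when $\alpha$ is countable, I tacitly assume this (otherwise the condition ``for each copy'' is vacuous). Write $\gamma=\omega_1^{CK}(\alpha)$; for infinite $\alpha$ this is simply the least admissible ordinal above $\alpha$, because $L_\delta\models\Inf$ automatically once $\delta>\om$. The first step is a translation: unwinding the definitions, $\beta\le_w\alpha$ holds iff every copy $A$ of $(\alpha,<)$ computes a copy of $(\beta,<)$. Invoking the basic fact that the order types of $X$-computable well-orderings are exactly the ordinals below $\omega_1^{CK,X}$, this is equivalent to: $\beta<\omega_1^{CK,A}$ for every copy $A$ of $\alpha$. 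So the theorem reduces to showing $\min\{\omega_1^{CK,A}:A\text{ is a copy of }\alpha\}=\gamma$.

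For $(2)\Rightarrow(1)$ I would establish the uniform lower bound $\omega_1^{CK,A}\ge\gamma$ for every copy $A$. Such an $A$ computes a well-ordering of type $\alpha$, so $\alpha<\omega_1^{CK,A}$; moreover $\omega_1^{CK,A}$ is $A$-admissible, hence admissible (the standard fact that $X$-admissible ordinals are admissible). Being an admissible ordinal strictly above $\alpha$, it is at least the least such, namely $\gamma$. Consequently, if $\beta<\gamma$ then $\beta<\gamma\le\omega_1^{CK,A}$ for every copy $A$, so $\beta\le_w\alpha$.

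The real content is $(1)\Rightarrow(2)$, and for it it suffices to produce one copy $A^*$ of $\alpha$ with $\omega_1^{CK,A^*}=\gamma$: then no $\beta\ge\gamma$ has an $A^*$-computable copy, so $\beta\not\le_w\alpha$. I would construct $A^*$ by forcing over $L_\gamma$. Let $\PP$ be the poset of finite partial injections from $\om$ into $\alpha$, ordered by reverse inclusion; then $\PP\in L_\gamma$. Because $\alpha$, and hence $\gamma$ and $L_\gamma$, is countable, only countably many dense subsets of $\PP$ belong to $L_\gamma$, so an $L_\gamma$-generic filter $G$ exists. The sets $\{p:n\in\mathrm{dom}(p)\}$ $(n<\om)$ and $\{p:\xi\in\mathrm{ran}(p)\}$ $(\xi<\alpha)$ are dense and lie in $L_\gamma$, so $f_G=\bigcup G$ is a bijection $\om\to\alpha$ and $A^*=\{(m,n):f_G(m)<f_G(n)\}$ is a genuine copy of $\alpha$. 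The inequality $\omega_1^{CK,A^*}\ge\gamma$ is automatic from the previous paragraph, so it remains to prove $\omega_1^{CK,A^*}\le\gamma$, i.e.\ that $A^*$ computes no well-ordering of type $\ge\gamma$. For this I would use that generic set forcing preserves admissibility: since $\gamma$ is admissible, $\PP\in L_\gamma$, and $G$ is $L_\gamma$-generic, the extension $L_\gamma[G]$ again models $\KP$, and it has the same ordinals as $L_\gamma$, namely $\gamma$. As $A^*\in L_\gamma[G]$ and $L_\gamma[G]\models\KP+\Inf$, each $\Phi_e^{A^*}$ lies in $L_\gamma[G]$; if one of them is a well-ordering, its order type is an ordinal of $L_\gamma[G]$, hence below $\gamma$. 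Thus $\omega_1^{CK,A^*}\le\gamma$, and with the lower bound, $\omega_1^{CK,A^*}=\gamma$.

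The main obstacle is this upper bound. It rests on the admissible-set forcing theorems: the definability of the forcing relation for statements arithmetic in the generic object over $L_\gamma$, and the preservation of $\KP$ under $L_\gamma$-generic set forcing. One must also check that the relevant dense sets genuinely lie in $L_\gamma$ rather than merely in $V$, which is why $\PP$ is built from conditions coded below $\alpha$. A forcing-free alternative proceeds through Jensen's fine structure: the least admissible $\gamma$ above $\alpha$ projects to at most $\alpha$ (there being no admissible in the interval $(\alpha,\gamma)$), and one reads off a $\Sigma_1$ master code over $L_\gamma$ that can be arranged to be a copy of $\alpha$ whose associated admissible ordinal is exactly $\gamma$; this trades the forcing machinery for fine-structural bookkeeping.
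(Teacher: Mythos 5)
The paper states this result purely as cited background (Sacks, following Kripke/Platek) and gives no proof of its own, and your argument is exactly the classical one from that literature: the lower bound via the facts that the $X$-computable well-order types are precisely the ordinals below $\omega_1^{CK,X}$ and that $A$-admissible ordinals are admissible, and the upper bound via a generic collapse of $\alpha$ to $\omega$ over $L_\gamma$, using definability of forcing and preservation of \KP{} under set forcing over countable admissible sets. Your proof is correct (with your tacit restriction to countable $\alpha$, which is needed for the statement to be non-vacuous); the only step worth flagging explicitly is ``the order type of a well-ordering lying in $L_\gamma[G]$ is an ordinal of $L_\gamma[G]$,'' which rests on the standard lemma that an admissible set computes the rank function of any genuinely well-founded relation it contains.
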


By contrast, Medvedev reducibility of structures, while also a natural notion, has not received as much attention. It is clear, however, that even in the simple case of ordinals Medvedev reducibility is more complicated than Muchnik reducibility: for example, while if $\beta<\alpha$ we trivially have that any copy of $\alpha$ computes a copy of $\beta$ uniformly {\em modulo a parameter}, there is no immediate way to remove that parameter. Indeed in general this is impossible, and uniformity is a very strong requirement in this context.

As a trivial example of a Medvedev reduction, note that for $n\in\omega$ and $\alpha$ arbitrary we have $\alpha+n\ge_s\alpha$; this is because, given a copy of $\alpha+n$, we wait to see $n$-many elements enter above some element $x$, and when we see this we enumerate $x$ into the copy of $\alpha$ we're building. It takes some work to find nontrivial examples of strong reductions. To motivate their study, we briefly recall an example due to Hamkins and Li \cite{HaLi}; the proof we sketch is due to Montalb\'an.\footnote{A similar argument, replacing the single ``variable cut point" $b$ with a fixed finite number of such points, shows that $\omega_1^{CK}\cdot n+k\ge_s\omega_1^{CK}$ for all $n\in\omega\setminus\{0\}$ and all $k<\omega_1^{CK}$; meanwhile, the question of whether $\omega_1^{CK}\cdot \omega\ge_s\omega_1^{CK}$ is still open.}

\begin{exmp}\label{example} $\omega_1^{CK}\le_s\omega_1^{CK}+\omega$: Suppose $B$ is a copy of $\omega_1^{CK}+\omega$ and $b\in B$. Let $$T_b=\{\sigma\in (B^2)^{<\omega}: \sigma(\vert\sigma\vert-1)_0<\sigma(\vert\sigma\vert-2)_0<...<\sigma(0)_0<b<\sigma(\vert\sigma\vert-1)_1<\sigma(\vert\sigma\vert-2)_1<...<\sigma(0)_1\}$$ be the tree of parallel descending sequences in the cuts above and below $b$, and let $L_b$ be the Kleene-Brouwer ordering of $T_b$. Then $L_b$ is a computable ordinal, and $\sup\{L_b: b\in B\}=\omega_1^{CK}$, so if $B=\{b_0, b_1, ...\}$ then $\sum_{i\in\omega}L_{b_i}=\omega_1^{CK}$, and this is constructed uniformly in $B$.
\end{exmp}

The question of understanding the Medvedev reducibilities between ordinals was originally raised by Hamkins and Li \cite{HaLi}, who made a number of observations and asked a number of questions on the topic which roughly fell into two categories: ``fine" issues about the existence of reducibilities between specific ordinals or the effectiveness of specific operations, and ``coarse" issues about the overall degree structure $\mathcal{D}_s^{ord}$ of ordinals modulo Medvedev equivalence. Amongst the latter, Question 43 asked whether $\mathcal{D}_s^{ord}$ is linearly ordered. The first section of this article answers this question; the rest of this article generalizes the results of the first section and uses them to give a characterization of counterexamples to Vaught's conjecture.

\bigskip

Throughout this article, set-theoretic ideas and hypotheses play a fundamental role. For this reason, the main technical prerequisites for this article come from set theory, for which we recommend \cite{Jec03}. While not containing any technical prerequisites, to the interested reader we recommend \cite{AK00} for background on computable structure theory. Besides standard notation we write ``$Struc$" for the set of countable structures and ``$Mod(T)$" for the set of countable models of a theory $T$.

\section{The global structure of the Medvedev degrees of ordinals}

We begin by analyzing the global structure of the Medvedev degrees of ordinals, $\mathcal{D}_s^{ord}$. An important tool here will be the natural extension of Medvedev reducibility to generic extensions of the universe; its generalizations (Definition \ref{strongdef}) will be studied further in this article.

\begin{defn}\label{genmeddef} Given two structures $\mathcal{A}$ and $\mathcal{B}$ of arbitrary cardinality, we say $\mathcal{A}$ is {\em generically Medvedev reducible\footnote{Generic {\em Muchnik} reducibility has already been studied to a certain extent (see for example chapters 4-7 of the author's Ph.D. thesis \cite{Sch16}).}} to $\mathcal{B}$ (and we write ``$\mathcal{A}\le_s^*\mathcal{B}$") if $\mathcal{A}\le_s\mathcal{B}$ in every generic extension where each structure is countable. (By Shoenfield absoluteness, we may replace ``every" with ``some" in this definition, so generic Medvedev reducibility is genuinely a property of the structures rather than a set-theoretic coincidence.) We let $\mathcal{D}_{s^*}^{ord}$ be the (class-sized) degree structure of all ordinals, countable or not, under generic Medvedev reducibility.
\end{defn} 

With this in hand, we can give a strong affirmative answer to Question 43 of \cite{HaLi}:

\begin{thm}\label{finafriggingly} There is a club $A\subset\omega_1$ which is an $\le_s$-antichain.
\end{thm}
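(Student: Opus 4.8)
The plan is to produce the club as a set of ``self-amalgamating'' ordinals whose mutual irreducibility is forced by a diagonalization against all Turing functionals simultaneously, and to arrange that the relevant approximation procedure is continuous at limits so the resulting set is closed. First I would note what a $\le_s$-antichain requires: for every pair $\alpha\neq\beta$ in $A$ and every index $e$, there must exist a copy $B$ of $\beta$ (resp. $\alpha$) on which $\Phi_e^B$ fails to be a copy of $\alpha$ (resp. $\beta$). The key leverage, as in Example \ref{example}, is that a copy of an ordinal $\beta$ carries essentially no information beyond its order type, so the only way $\Phi_e^B$ can uniformly produce a copy of $\alpha$ is by reading genuinely ordinal-theoretic features of $B$; against a ``sufficiently generic'' copy of $\beta$ such a uniform reduction must already succeed on all generic copies, which lets me pass to $\le_s^*$ and use the absoluteness noted in Definition \ref{genmeddef}. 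So I would first prove the statement for generic Medvedev reducibility, obtaining an $\le_s^*$-antichain, and then descend to $\le_s$ on the countable ordinals in it.

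Next I would set up the diagonalization. Enumerate all pairs $(e, \text{side})$ to be defeated. I build the club $A = \{\alpha_\xi : \xi < \omega_1\}$ by recursion, at each successor stage choosing $\alpha_{\xi+1}$ large enough above $\sup_{\eta\le\xi}\alpha_\eta$ that it encodes, via its internal structure, a obstruction to every functional $\Phi_e$ claiming to compute any previously chosen $\alpha_\eta$ from a generic copy of $\alpha_{\xi+1}$, and symmetrically so that no $\Phi_e$ computes $\alpha_{\xi+1}$ from a generic copy of an earlier $\alpha_\eta$. The mechanism for defeating a single functional should mirror the Kleene--Brouwer/parallel-descending-sequence idea: the supremum of the ordinals extractable by a fixed $\Phi_e$ from generic copies is pinned down by a $\Sigma_1$-type computation relative to the ordinal, so by choosing the ordinals to sit at carefully separated admissibility levels (the $\omega_1^{CK}(\cdot)$ hierarchy of the Sacks theorem quoted above) I can guarantee a uniform reduction in one direction forces an order-type match that the size gap rules out. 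At limit stages $\xi$ I set $\alpha_\xi = \sup_{\eta<\xi}\alpha_\eta$; this is exactly the closure point needed for $A$ to be closed, and I must verify that the diagonalization requirements are preserved under this supremum, i.e. that a reduction at a limit ordinal would reflect to some earlier stage already handled.

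The main obstacle I expect is precisely this limit/closure interaction: making the construction genuinely produce a \emph{club} rather than merely an unbounded antichain. It is easy to diagonalize and get an unbounded set, but inserting all suprema of initial segments threatens the antichain property, since a limit $\alpha_\xi$ is not itself chosen to defeat anything and might be reducible to (or compute) some earlier $\alpha_\eta$. Overcoming this requires showing that the generic Medvedev degree of $\sup_{\eta<\xi}\alpha_\eta$ is controlled by the degrees of the $\alpha_\eta$ in a way compatible with incomparability --- heuristically, that a uniform reduction involving the limit must localize below the limit and hence contradict an already-secured requirement. I would handle this by a reflection argument: a single index $e$ witnessing $\alpha_\xi \le_s^* \alpha_\eta$ (or the reverse) is a boundedly-definable object, so by normality/continuity of the construction it would witness the same relation between cofinally many pairs of earlier stages, contradicting the successor-stage diagonalization. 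Making this reflection precise, and ensuring the chosen ``separation'' of admissibility levels is robust enough to survive the limit suprema, is where the real work lies; the rest is the by-now-standard bookkeeping of an $\omega_1$-length priority-style recursion combined with Shoenfield absoluteness to transfer the result back to genuine $\le_s$.
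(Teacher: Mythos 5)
There are two genuine gaps here, and both sit exactly where you locate ``the real work.'' First, your mechanism for the hard half of incomparability points in the wrong direction. For $\alpha<\beta$ in the prospective club, the requirement $\alpha\not\ge_s\beta$ is indeed handled by admissibility separation: if $\beta>\omega_1^{CK}(\alpha)$ then $\beta\not\le_w\alpha$, hence $\beta\not\le_s\alpha$, and this condition is closed under suprema, so limit stages are harmless on that side. The genuinely hard requirement is $\beta\not\ge_s\alpha$ --- that the \emph{larger} ordinal does not uniformly compute the smaller one --- and no ``separated admissibility levels / size gap rules out an order-type match'' argument can touch it: every copy of $\beta$ computes a copy of $\alpha$ with a parameter, and Example \ref{example} shows the parameter can sometimes be eliminated ($\omega_1^{CK}+\omega\ge_s\omega_1^{CK}$), so there is no complexity obstruction in the downward direction. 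Your recursion thus never establishes that a suitable $\alpha_{\xi+1}$ (with $\alpha_{\xi+1}\not\ge_s\alpha_\eta$ for all $\eta\le\xi$) exists at all, and proving that existence is essentially the whole theorem. Second, the limit-stage ``reflection'' claim --- that a single index witnessing a reduction involving $\alpha_\xi=\sup_{\eta<\xi}\alpha_\eta$ would witness the same relation between cofinally many earlier pairs --- is unsupported: Medvedev reducibility via a fixed index has no continuity along suprema of ordinals, and nothing in your construction makes it ``normal'' in the sense that would be needed.

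The paper avoids both problems by doing no recursion whatsoever. It sets $\theta=\sup\{\gamma<\omega_1:\gamma\le_s^*\omega_1\}$ (countable, since there are only countably many indices) and takes $E=\{M\cap\omega_1: M\prec H_{\omega_2} \text{ countable},\ \theta+1\subseteq M\}$, which contains a club. The point is homogeneity: \emph{every} element of $E$, with no successor/limit distinction, is the transitive collapse of $\omega_1$ over such an $M$. Given $\alpha<\beta$ in $E$ with witness $M_\beta$, the easy direction is as above ($\omega_1^{CK}(\alpha)\in M_\beta\cap\omega_1=\beta$); for the hard direction, $\alpha\in M_\beta$ and $\alpha>\theta$ give $M_\beta\models\omega_1\not\ge_s^*\alpha$, which the Mostowski collapse turns into ``$\beta\not\ge_s^*\alpha$'' inside a countable transitive model of \ZFC{}, and Lemma \ref{medabs} --- absoluteness of ``$\alpha\le_s\beta$ via $\Phi_e$'' between such models and $V$, proved by making the statement $\Pi^1_1$ via Scott sentences --- transfers this to genuine $\le_s$-irreducibility in $V$. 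That collapse-plus-absoluteness step is the missing idea in your sketch: it converts the trivial fact that $\omega_1$ fails to uniformly compute all but countably many countable ordinals into the needed fact about pairs of countable ordinals, and any attempt to salvage your recursion would still need it (or a substitute) just to see that acceptable next elements exist cofinally.
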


Before presenting the proof, we observe that we can immediately produce a two-element antichain under a mild large cardinal hypothesis --- namely, that $\Sigma^1_3$ sentences are reflected by forcing.\footnote{This is not such a wild hypothesis as it might appear at first: while the weakest natural large cardinal notion which outright {\em implies} it is rather strong, $\Sigma^1_3$-absoluteness itself has no additional consistency strength over \ZFC{}. To see this, let $\mathbb{P}_n$ force the $n$th ``forceable" $\Sigma^1_3$-sentence and consider the product $\prod\mathbb{P}_n$. By Shoenfield absoluteness this forces all forceable $\Sigma^1_3$ sentences, and so in particular forces $\Sigma^1_3$ generic absoluteness. See also the remark following the proof of Theorem $3$ of Bagaria and Friedman \cite{BF01}.} Besides helping to motivate the actual proof below, this type of argument will be used seriously later on: 

The $\le_s^*$-degrees are each countable and have the countable predecessor property (since there are only countably many indices for Medvedev reductions), hence we may find ordinals $\alpha,\beta<\omega_2$ which are $\le_s^*$-incomparable. Let $G$ be $Col(\omega, \alpha+\beta)$-generic over $V$; then $V[G]$ satisfies ``The $\le_s$-degrees are not linearly ordered," as witnessed by $\alpha$ and $\beta$. Appropriately expressed, this is $\Sigma^1_3$, hence already true in $V$ via our large cardinal assumption. 

We now want to improve this argument in two ways --- reduce the axioms needed and drastically increase the size of the antichain produced. The first of these is handled by the following absoluteness result:

\begin{lem}\label{medabs} Suppose $M$ is a countable transitive model of \ZFC, $\alpha,\beta\in ON^M$, and $e\in\omega$. Then the following are equivalent:\begin{enumerate}

\item $\alpha\le_s\beta$ via $\Phi_e$ (that is, $V\models$``$\alpha\le_s\beta$ via $\Phi_e$").

\item $M\models$``$\alpha\le_s^*\beta$ via $\Phi_e$".

\end{enumerate}

Similarly, $M\models$``$\alpha\le_s^*\beta$" iff $\alpha\le_s\beta$.
\end{lem}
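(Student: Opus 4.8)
The plan is to prove both directions of the first equivalence and then derive the ``$\le_s^*$'' statement as a corollary, with the entire argument hinging on the absoluteness of the statement ``$\alpha \le_s \beta$ via $\Phi_e$'' between $M$ and $V$ once we pass to a generic extension in which the ordinals become countable. The key observation is that the assertion ``$\Phi_e$ is a Medvedev reduction witnessing $\alpha \le_s \beta$'' is, after collapsing $\alpha$ and $\beta$ to be countable, a statement about reals of low projective complexity (roughly $\Pi^1_2$: ``for every copy $B$ of $\beta$, $\Phi_e^B$ is total and codes a structure isomorphic to $\alpha$''), so Shoenfield absoluteness will do the heavy lifting.

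First I would handle the implication $(2) \Rightarrow (1)$. Suppose $M \models$ ``$\alpha \le_s^* \beta$ via $\Phi_e$''. Working inside $M$, take a $Col(\omega, \beta)$-generic $g$ over $M$ (which exists since $M$ is countable, so its forcing posets are countable from the outside and meet only countably many dense sets); in $M[g]$ both $\alpha$ and $\beta$ are countable, and by the definition of $\le_s^*$ we have $M[g] \models$ ``$\Phi_e$ witnesses $\alpha \le_s \beta$''. Now I would push this up to $V$: the $M$-ordinals $\alpha, \beta$ are genuinely countable ordinals in $V$ (as $M$ is countable and transitive, $ON^M$ is an ordinal below $\omega_1^V$), and the statement ``$\Phi_e$ witnesses $\alpha \le_s \beta$'' is $\Pi^1_2$ in real parameters coding copies of $\alpha$ and $\beta$. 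Since $M[g]$ and $V$ agree on such a $\Pi^1_2$ fact by Shoenfield absoluteness (upward $\Sigma^1_2$/downward $\Pi^1_2$ absoluteness between any transitive model containing the relevant reals and $V$), we conclude $V \models$ ``$\alpha \le_s \beta$ via $\Phi_e$''. A point to be careful about: I must ensure the real parameters coding the copies live in the right models, but since a single fixed computable copy of each countable ordinal suffices and $\Phi_e$ is an integer, the parameters are harmless.

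For $(1) \Rightarrow (2)$ I would argue contrapositively or directly by the same absoluteness, now running Shoenfield in the other direction. If $V \models$ ``$\alpha \le_s \beta$ via $\Phi_e$'', this is again a $\Pi^1_2$ statement that is downward absolute to any transitive model of enough set theory containing the codes; applying this inside an arbitrary $Col(\omega,\beta)$-generic extension $M[g]$ of $M$, we get $M[g] \models$ ``$\alpha \le_s \beta$ via $\Phi_e$'', and since $g$ was arbitrary this is exactly $M \models$ ``$\alpha \le_s^* \beta$ via $\Phi_e$''. The last sentence of the lemma follows by quantifying over $e$: ``$\alpha \le_s^* \beta$'' in $M$ means there exists $e$ with $M \models$ ``$\alpha \le_s^* \beta$ via $\Phi_e$'', which by the first equivalence is equivalent to the existence of $e$ with $V \models$ ``$\alpha \le_s \beta$ via $\Phi_e$'', i.e.\ $\alpha \le_s \beta$.

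The main obstacle I anticipate is pinning down the exact projective complexity of ``$\Phi_e$ witnesses $\alpha \le_s \beta$'' and verifying that it genuinely falls within the reach of Shoenfield absoluteness. The universal quantifier over copies $B$ of $\beta$ ranges over reals, and ``$\Phi_e^B$ is a copy of $\alpha$'' itself contains an existential isomorphism quantifier over reals; naively this looks like $\Pi^1_2$, which is exactly the Shoenfield boundary, so I would need to check the quantifier arithmetic carefully rather than overshooting into $\Pi^1_3$. I expect this works because isomorphism of a computable-from-$B$ structure with a \emph{fixed} ordinal copy can be expressed more cheaply (the back-and-forth for well-orders is arithmetic in the two copies), keeping the whole statement $\Pi^1_2$; confirming this complexity bound is the crux on which the clean use of Shoenfield — and hence the entire lemma — rests.
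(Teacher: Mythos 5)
Your direction $(1)\Rightarrow(2)$ is essentially sound, and is in fact close to the paper's: for fixed copies $A,B\in M[g]$, the paper simply observes that ``$\Phi_e^{B}\cong A$'' is $\Sigma^1_1$ in parameters from $M[g]$ and hence absolute, while your downward-$\Pi^1_2$ transfer also works, since $\Pi^1_2$ statements are downward absolute to any transitive model containing the parameters. The genuine gap is in $(2)\Rightarrow(1)$. There you must transfer a $\Pi^1_2$ statement \emph{upward} from the countable transitive model $M[g]$ to $V$, and that is precisely the direction that fails: your parenthetical ``upward $\Sigma^1_2$/downward $\Pi^1_2$'' correctly states the valid absoluteness, but you then apply its converse. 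Concretely, the universal quantifier ``for every copy $B'$ of $\beta$'' interpreted in $M[g]$ ranges only over the reals of $M[g]$, whereas $V$ contains copies of $\beta$ lying in no generic extension of $M$, and nothing in your argument rules out $\Phi_e$ misbehaving on one of these. Shoenfield absoluteness holds between $V$ and its generic extensions, or between $V$ and inner models, not between $V$ and an arbitrary countable transitive model: for instance, ``no real codes a wellfounded model of \ZFC{}'' is $\Pi^1_2$, true in a countable transitive model of minimal height, and false in $V$ whenever such a model exists. Your fallback hope that isomorphism of wellorders is ``arithmetic in the two copies'' is also false (it is properly $\Delta^1_1$, with complexity growing with the ordinal), but even granting the $\Pi^1_2$ bound, the upward transfer is simply unavailable at that complexity.

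The missing idea --- and the actual crux of the paper's proof --- is to reduce the complexity from $\Pi^1_2$ to $\Pi^1_1$, where Mostowski absoluteness (via wellfoundedness of trees and rank functions) transfers truth in \emph{both} directions between transitive models. In $M[G]$, take the Scott sentence $\varphi\in\mathcal{L}_{\omega_1\omega}$ of the copy $A$ of $\alpha$; the statement ``for every isomorphic copy $B'$ of $B$, $\Phi_e^{B'}$ satisfies $\varphi$'' is $\Pi^1_1$ in the real parameters (the hypothesis ``$B'\cong B$'' occurs negatively, so contributes only the negation of a $\Sigma^1_1$ condition, and satisfaction of a fixed coded infinitary sentence is hyperarithmetic in its inputs), and this $\Pi^1_1$ truth in $M[G]$ propagates up to $V$, where satisfying $\varphi$ recovers genuine isomorphism with $\alpha$. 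For ordinals one could alternatively replace ``$\Phi_e^{B'}\cong A$'' by ``$\Phi_e^{B'}$ is a wellfounded linear order and neither it nor $A$ order-embeds into a proper initial segment of the other,'' again $\Pi^1_1$. Your minor slips --- $Col(\omega,\beta)$ need not collapse $\alpha$ when $\alpha>\beta$, and ordinals of $M$ need not have computable copies --- are easily repaired, but the upward-$\Pi^1_2$ step is not: the quantifier reduction you flagged as the crux is not a matter of careful bookkeeping within $\Pi^1_2$, it is the replacement of the isomorphism quantifier by a Scott sentence that makes the lemma go through.
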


\begin{proof} The final sentence follows from the main equivalence since $\omega^M=\omega^V$.

For $(1)\rightarrow(2)$, suppose that $e$ is a true index for a Medvedev reduction $\alpha\le_s\beta$, let $M[G]$ be a forcing extension in which $\alpha$ and $\beta$ are each countable, and let $A, B\in M[G]$ be copies $\alpha,\beta$ with domain $\omega$ respectively. Then in $V$ we have $\Phi_e^B\cong A$; since this is $\Sigma^1_1$ with real parameters from $M[G]$, we have $M[G]\models$``$\Phi_e^B\cong A$." So $M\models\alpha\le_s\beta$ via $\Phi_e$.

For $(2)\rightarrow(1)$, suppose $M\models$``$\alpha\le_s^*\beta$ via $\Phi_e$." Let $M[G]$ be a forcing extension in which $\alpha$ and $\beta$ are each countable, and let $A,B\in M[G]$ be copies of $\alpha,\beta$ with domain $\omega$ respectively. Let $\varphi\in\mathcal{L}_{\omega_1\omega}$ be the Scott sentence of $A$; this exists in $M[G]$. Then $M[G]$ satisfies ``For every isomorphic copy of $B$ with domain $\omega$, $\Phi_e^B$ is a structure satisfying $\varphi$." This is $\Pi^1_1$ with real parameters, so holds in $V$ --- that is, $\alpha\le_s\beta$ via $\Phi_e$.

Note that in each of the previous two paragraphs, since $M$ is countable an appropriately-generic $G$ does indeed exist (that is, $G\in V$).
\end{proof}

\begin{proof}[Proof of Theorem \ref{finafriggingly}] Let $\theta=\sup\{\gamma<\omega_1: \gamma\le_s^*\omega_1\}$, and note that $\theta$ is countable. For a set $X$, let $MC(X)$ denote the Mostowski collapse of $X$ and $mc_X: X\rightarrow MC(X)$ the associated collapse function. Let $$E=\{mc_M(\omega_1): \vert M\vert=\omega, M\prec H_{\omega_2}, \theta+1\subseteq M\}$$ be the set of possible collapses of $\omega_1$ via large enough countable elementary submodels of enough of the universe. $E$ contains a club (consider any $\omega_1$-indexed increasing sequence of appropriate models $(M_\eta)_{\eta<\omega_1}$ where limit models are given by unions and $\omega_1\cap M_\eta\subsetneq\omega_1\cap M_{\eta+1}$), so it is enough to show that $E$ is an antichain under $\le_s$.

Suppose $\alpha<\beta$ are in $E$, and let $M_\beta\prec H_{\omega_2}$ with $mc_{M_\beta}(\omega_1)=\beta$. Then $M_\beta\cap \omega_1=\beta$ and $mc_{M_\beta}$ is the identity on $M_\beta\cap\omega_1$, since the countable ordinals in $M_\beta$ are closed downwards.  Since admissibility is absolute, the next admissible $\omega_1^{CK}(\alpha)$ is in $M_\beta$ and so is $<\beta$; but then since Medvedev reducibility refines Muchnik reducibility we have $\sup\{\gamma: \gamma\le_s\alpha\}\le\omega_1^{CK}(\alpha)<\beta$, so $\alpha\not\ge_s\beta$.

In the other direction, $M_\beta\models \omega_1\not\ge_s^*\alpha$ since $\alpha\in M_\beta$ and $\alpha>\theta$ (since every element of $E$ is $>\theta$). This gives us $MC(M_\beta)\models mc_{M_\beta}(\omega_1)\not\ge_s^* mc_{M_\beta}(\alpha)$. By definition $mc_{M_\beta}(\omega_1)=\beta$, and since $\alpha<\beta$ we have $mc_{M_\beta}(\alpha)=\alpha$; so $MC(M_\beta)\models \beta\not\ge_s^*\alpha$. Now apply Lemma \ref{medabs}.
\end{proof}

\begin{rmrk} Another approach to building $\le_s$-antichains is as follows. Since the $\le_s^*$-degrees have the countable predecessor property, we can find $\alpha,\beta\in Ord$ with $\alpha\not\le_s^*\beta,\beta\not\le_s^*\alpha$; in the Mostowski collapse of a countable elementary substructure of (enough of) $V$ containing $\alpha$ and $\beta$, the images of $\alpha$ and $\beta$ are $\le_s^*$-incomparable, and by Lemma \ref{medabs} are in fact $\le_s$-incomparable. 

This also allows us to build $\le_s$-antichains whose ordertype in the usual sense is arbitrarily large below $\omega_1$: via Erdos-Rado, there is an $\le_s^*$-antichain $A$ of cardinality $\omega_1$; now given $\alpha<\omega_1$, letting $M$ be a countable elementary submodel with $\alpha+1\subseteq M$ and $A\in M$ and taking $N$ to be the Mostowski collapse of $M$, the image of $A\cap M$ under the collapse map is an $\le_s$-antichain of ordertype (in the usual sense) $>\alpha$.

There is no reason why these countable antichains should cohere nicely, however, so this does not let us prove Theorem \ref{finafriggingly} or even the existence of a merely uncountable $\le_s$-antichain; however, it does seem potentially more flexible for other situations, so we mention it here.
\end{rmrk}

Incidentally, by elementarity together with Lemma \ref{medabs}, we get a couple further homogeneity properties of $E$ worth noting.

\begin{cor} There is a countable ordinal $\mu_{club}$ such that for club-many countable $\alpha$, $\min\{\gamma: \gamma\not\le_s\alpha\}=\mu_{club}$; and $\mu_{club}=\min\{\gamma: \gamma\not\le_s^*\omega_1\}$
\end{cor}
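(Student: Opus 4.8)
The plan is to set $\mu_{club}:=\min\{\gamma:\gamma\not\le_s^*\omega_1\}$, so that the second assertion of the corollary holds by definition, and then to verify the first assertion along the club $E$ constructed in the proof of Theorem \ref{finafriggingly}. First I would check that $\mu_{club}$ is well-defined and countable. By the countable predecessor property of the $\le_s^*$-degrees (there being only countably many indices for Medvedev reductions) the collection $\{\gamma:\gamma\le_s^*\omega_1\}$ is countable, so its complement is a nonempty class of ordinals; and the least ordinal missing from a countable set of ordinals is itself countable, so $\mu_{club}$ exists and is countable.

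Now fix $\alpha\in E$ together with a witnessing $M\prec H_{\omega_2}$ with $|M|=\omega$, $\theta+1\subseteq M$, and $mc_M(\omega_1)=\alpha$. The key observation is that $\mu_{club}$ is definable over $H_{\omega_2}$ without parameters: $\omega_1$ is definable over $H_{\omega_2}$, and --- exactly as already used in the proof of Theorem \ref{finafriggingly} when $\not\ge_s^*$ was evaluated inside $M_\beta$ and $MC(M_\beta)$ --- the relation $\le_s^*$ restricted to ordinals below $\omega_2$ is definable over $H_{\omega_2}$. Hence $\mu_{club}\in M$, and since $\mu_{club}<\omega_1$ we get $\mu_{club}\in M\cap\omega_1=\alpha$; in particular $\mu_{club}<\alpha$, and as $mc_M$ is the identity on $M\cap\omega_1$ it fixes $\mu_{club}$.

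Next I would push the defining property of $\mu_{club}$ through the collapse. By elementarity $M\models$``$\mu_{club}$ is the least $\gamma$ with $\gamma\not\le_s^*\omega_1$,'' and applying $mc_M$ yields that $N:=MC(M)$ --- a countable transitive model of \ZFC{} with $\mu_{club},\alpha\in N$ --- satisfies ``$\mu_{club}$ is the least $\gamma$ with $\gamma\not\le_s^*\alpha$.'' Finally I would invoke the last sentence of Lemma \ref{medabs}: for every ordinal $\gamma\in N$ we have $N\models\gamma\le_s^*\alpha$ iff $\gamma\le_s\alpha$ holds in $V$. Since $N$ is transitive and $\mu_{club}\in N$, every $\gamma<\mu_{club}$ lies in $N$, so the statement in $N$ translates to ``$\gamma\le_s\alpha$ in $V$ for every $\gamma<\mu_{club}$, while $\mu_{club}\not\le_s\alpha$ in $V$,'' i.e.\ $\min\{\gamma:\gamma\not\le_s\alpha\}=\mu_{club}$. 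As $\alpha\in E$ was arbitrary and $E$ contains a club, this holds for club-many countable $\alpha$.

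The well-definedness and countability of $\mu_{club}$ and the elementarity-and-collapse bookkeeping are routine and parallel the theorem. The one point requiring genuine care --- and the main potential obstacle --- is the definability of $\le_s^*$ on ordinals below $\omega_2$ over $H_{\omega_2}$, which is what simultaneously licenses $\mu_{club}\in M$ and the transfer of the defining statement through $M\prec H_{\omega_2}$. This is the same ingredient already tacitly relied upon in the proof of Theorem \ref{finafriggingly}, so I would isolate it once and reuse it here rather than reprove it.
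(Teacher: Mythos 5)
Your proposal is correct and takes essentially the same route as the paper: both set $\mu_{club}=\min\{\gamma:\gamma\not\le_s^*\omega_1\}$, push this defining property through a countable elementary submodel of $H_{\omega_2}$ and its Mostowski collapse along the club $E$ from Theorem \ref{finafriggingly}, and conclude via the last sentence of Lemma \ref{medabs}. The only (harmless) divergence is how you obtain $\mu_{club}\in M$ --- via parameter-free definability of $\mu_{club}$ over $H_{\omega_2}$, where the paper instead observes $\mu_{club}\le\theta$ and uses $\theta+1\subseteq M$ --- and the definability of $\le_s^*$ over $H_{\omega_2}$ that you rightly flag is tacitly needed by both arguments for the elementarity transfer.
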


\begin{proof} Note that $\min\{\gamma: \gamma\not\le_s^*\omega_1\}\le\theta$, and every $\alpha\in E$ is the Mostowski collapse of $\omega_1$ via some countable elementary submodel containing $\theta+1$ as a subset; so $\min\{\gamma: \gamma\not\le_s^*\omega_1\}$ is fixed by the Mostowski collapse of such a model. Now apply Lemma \ref{medabs}.
\end{proof}

The second corollary requires a couple definitions:

\begin{defn} Write $\MM(\mathcal{A})$ for the set of {\em indices for Medvedev computations of ordinals} from $\mathcal{A}$, that is, the set of $e$ such that for every pair of copies $A_0, A_1$ of $\mathcal{A}$, $\Phi_e^{A_0}$ and $\Phi_e^{A_1}$ are total and yield isomorphic ordinals (or such that the previous holds in every generic extension where $\mathcal{A}$ is countable).
\end{defn} 

When $c\in\MM(\mathcal{A})$ we may write ``$\Phi_c^\mathcal{A}$" without confusion.

\begin{defn} The {\em collapsed spectrum} of an ordinal $\alpha$, denoted by ``$CS(\alpha)$," is the ordertype of the set $\{\gamma:\gamma\le_s^*\alpha\}$.
\end{defn}

\begin{cor}\label{thetaclub} There is a countable ordinal $\theta_{club}$ such that $CS(\alpha)=\theta_{club}$ for club-many countable $\alpha$; and $\theta_{club}=CS(\omega_1)$.
\end{cor}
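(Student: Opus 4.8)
The plan is to prove Corollary \ref{thetaclub} by exactly imitating the proof of the previous corollary, replacing the function $\alpha \mapsto \min\{\gamma : \gamma \not\le_s \alpha\}$ with the function $\alpha \mapsto CS(\alpha)$, and verifying that this new function is controlled by elementarity and Lemma \ref{medabs} in the same way. The underlying mechanism in both corollaries is identical: the set $E$ from the proof of Theorem \ref{finafriggingly} contains a club, and every $\alpha \in E$ arises as $mc_M(\omega_1)$ for a countable $M \prec H_{\omega_2}$ with $\theta + 1 \subseteq M$; so any genuinely definable-from-$\omega_1$ invariant is fixed across all such collapses, giving a single value on a club.

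Concretely, first I would set $\theta_{club} = CS(\omega_1)$, where $CS(\omega_1)$ is the ordertype of $\{\gamma : \gamma \le_s^* \omega_1\}$; note this is a countable ordinal since, as observed in the proof of Theorem \ref{finafriggingly}, $\theta = \sup\{\gamma < \omega_1 : \gamma \le_s^* \omega_1\}$ is countable, so the set being measured is a set of countable ordinals of countable ordertype. The main point is that the statement ``$CS(x) = \theta_{club}$,'' read with parameter $\theta_{club}$, is a property of $\omega_1$ that is computed internally from the $\le_s^*$-predecessors of $\omega_1$. Since $M \prec H_{\omega_2}$ contains $\theta + 1$ as a subset, $M$ contains every $\gamma \le_s^* \omega_1$ (as all such $\gamma$ are $\le \theta < \theta + 1 \subseteq M$, and the witnessing reductions are coded by natural numbers, which $M$ contains); hence the ordertype of this predecessor set is computed the same inside $M$ as in $V$, and the Mostowski collapse $mc_M$ fixes it since it fixes each of these small ordinals. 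Thus $MC(M) \models CS(\beta) = \theta_{club}$ where $\beta = mc_M(\omega_1)$, now read with the $\le_s^*$-relation internal to $MC(M)$. Applying the final sentence of Lemma \ref{medabs} (that $MC(M) \models \gamma \le_s^* \delta$ iff $\gamma \le_s \delta$ for ordinals $\gamma, \delta$ of $MC(M)$, since $MC(M)$ is a countable transitive model of \ZFC) converts the internal $\le_s^*$-spectrum into the genuine $\le_s$-spectrum, yielding $CS(\beta) = \theta_{club}$ as computed by $\le_s$ in $V$ --- wait, more carefully, $CS$ is defined via $\le_s^*$, so the cleanest reading is that the $\le_s^*$-spectrum of $\beta$ as computed in $MC(M)$ has ordertype $\theta_{club}$, and by Lemma \ref{medabs} this is genuinely the $\le_s^*$-spectrum of $\beta$ in $V$, so $CS(\beta) = \theta_{club}$ for every $\beta \in E$.

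The step requiring the most care is confirming that the collapse $mc_M$ genuinely preserves the spectrum, i.e. that $\{\gamma : \gamma \le_s^* \omega_1\}^M$ is an initial segment of the ordinals consisting of small ordinals all below $\theta + 1$, so that $mc_M$ acts as the identity on it and hence preserves its ordertype; this is where the hypothesis $\theta + 1 \subseteq M$ is used, paralleling its use in the antichain proof. I would also need to check that the class $E$ contains a club (already established in the proof of Theorem \ref{finafriggingly}), so that ``club-many $\alpha$'' is justified, and that $\beta = mc_M(\omega_1)$ ranges over $E$ by construction. None of these steps presents a serious obstacle; the corollary is essentially a restatement of the previous one with a different elementary-invariant plugged in, and the real content was already extracted in proving Theorem \ref{finafriggingly} and the first corollary.
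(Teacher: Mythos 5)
Your argument runs on the same engine the paper uses elsewhere (elementarity into a countable $M\prec H_{\omega_2}$ with $\theta+1\subseteq M$, Mostowski collapse, then Lemma \ref{medabs}), but there is one genuine gap, located exactly at the step you flag as the ``cleanest reading.'' The final sentence of Lemma \ref{medabs} tells you, for each ordinal $\gamma$ \emph{of} $MC(M)$, that $MC(M)\models\gamma\le_s^*\beta$ iff $\gamma\le_s\beta$ in $V$. That identifies the internal spectrum of $\beta$ with $\{\gamma:\gamma\le_s^*\beta\}\cap ON^{MC(M)}$; it does not by itself rule out ordinals $\gamma\geq ON\cap MC(M)$ with $\gamma\le_s^*\beta$, and if such $\gamma$ existed then $CS(\beta)$ as computed in $V$ would exceed the ordertype computed inside $MC(M)$, breaking the conclusion $CS(\beta)=\theta_{club}$. (For $\omega_1$ itself you handled the analogous point via $\theta+1\subseteq M$, but nothing in your write-up bounds the $\le_s^*$-predecessors of the \emph{collapsed} ordinal $\beta$.) The gap closes with material already deployed in the proof of Theorem \ref{finafriggingly}: since Medvedev reducibility refines Muchnik reducibility, any $\gamma\le_s\beta$ satisfies $\gamma<\omega_1^{CK}(\beta)$; the next admissible past $\omega_1$ is definable from $\omega_1$ in $H_{\omega_2}$, hence lies in $M$, and since admissibility is absolute its collapse is an admissible ordinal above $\beta$ among the ordinals of $MC(M)$. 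So the true spectrum of $\beta$ is contained in $ON^{MC(M)}$, and with that your argument is correct.

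It is worth knowing that the paper's own proof does \emph{not} imitate the previous corollary in your manner, and its choice of device sidesteps the above issue entirely: it codes $CS(\alpha)$ as the ordertype of a preorder $\trianglelefteq_\alpha$ on $\omega$, where $c\trianglelefteq_\alpha d$ holds trivially if $c\notin\MM(\alpha)$, and for $c,d\in\MM(\alpha)$ holds iff $\Phi_c^\alpha\le\Phi_d^\alpha$ in some/every generic extension making $\alpha$ countable. Both $\MM(\alpha)$ and $\trianglelefteq_\alpha$ are absolute by the argument of Lemma \ref{medabs}, and the ordertype of $\trianglelefteq_\alpha$ is exactly $CS(\alpha)$, so $CS(\alpha)=CS(\omega_1)$ for all $\alpha\in E$. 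Because every $\le_s^*$-predecessor is named by a natural-number index and $\omega^{MC(M)}=\omega^V$, the whole invariant is a single relation on $\omega$ fixed by the collapse, and no separate bounding of the spectrum by the next admissible is needed. Your direct-ordinal version works once patched, but the index-preorder formulation is cleaner, and it is the form the paper reuses in Theorem \ref{clubordasr}, where the collapse-and-elementarity mechanism is no longer available.
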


\begin{proof} For $\alpha$ a countable ordinal, consider the preordering $\trianglelefteq_\alpha$ on $\omega$ given by comparing lengths of Medvedev reductions from $\alpha$:\begin{itemize}

\item If $c\not\in \MM(\alpha)$, then $c\trianglelefteq_\alpha d$.

\item If $c, d\in \MM(\alpha)$, then $c\trianglelefteq_\alpha d$ iff ``$\Phi_c^{\alpha}\le\Phi_d^{\alpha}$" is true in some/every generic extension in which $\alpha$ is countable.
\end{itemize}

Identically to the proof of Lemma \ref{medabs}, both $\MM(\alpha)$ and $\trianglelefteq_\alpha$ are absolute between generic extensions. The ordertype of $\trianglelefteq_\alpha$ is exactly $CS(\alpha)$, so for every $\alpha\in E$ we have $CS(\alpha)=CS(\omega_1)$.
\end{proof}

\begin{rmrk} Note that Corollary \ref{thetaclub} alone would give us a club antichain: letting $C$ be a club on which $CS(-)$ is constant, consider the sub-club $C'=\{\gamma\in C: \forall \beta<\gamma, \gamma>\omega_1^{CK}(\beta)\}$. Given $\alpha<\beta$ in $C'$, we cannot have $\alpha\ge_s\beta$ since $\beta>\omega_1^{CK}(\alpha)$, and if $\beta\ge_s\alpha$ we would have to have $CS(\beta)>CS(\alpha)$ since $\beta>\sup\{\gamma:\gamma\le_s\alpha\}$. We will use this below, in Theorem \ref{clubordasr}, to construct a club antichain with respect to more complicated reducibilities using additional hypotheses.
\end{rmrk}

Finally, we can bound the ``height" of $\mathcal{D}_{s^*}^{ord}$:

\begin{fct}\label{thetasupmed} For each $\alpha\in ON$, we have $CS(\alpha)<\omega_1^L$.
\end{fct}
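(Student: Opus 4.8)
The plan is to identify $CS(\alpha)$ with the ordertype of a constructible wellordering on $\omega$, and then compute that ordertype inside $L$. First I would note that
$$\{\gamma:\gamma\le_s^*\alpha\}=\{\Phi_e^\alpha: e\in\MM(\alpha)\}:$$ any witness $e$ to $\gamma\le_s^*\alpha$ automatically lies in $\MM(\alpha)$ with $\Phi_e^\alpha=\gamma$, and conversely each $e\in\MM(\alpha)$ yields an ordinal $\Phi_e^\alpha\le_s^*\alpha$. In particular this set is the surjective image of a subset of $\omega$, so it is countable and $CS(\alpha)$ is already a countable ordinal. To pin down its ordertype I would reuse the preordering $\trianglelefteq_\alpha$ on $\omega$ introduced in the proof of Corollary \ref{thetaclub}, whose (wellfounded, linearly ordered) quotient has ordertype exactly $CS(\alpha)$.

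The key step is to show $\trianglelefteq_\alpha$ is constructible. Since $\alpha$ is an ordinal, $\alpha\in L$, and $\trianglelefteq_\alpha$ is a relation on $\omega$ defined from $\alpha$ solely in terms of membership in $\MM(\alpha)$ and comparisons ``$\Phi_c^{\alpha}\le\Phi_d^{\alpha}$'' evaluated in generic extensions collapsing $\alpha$ to be countable. Exactly as in Lemma \ref{medabs}, each of these conditions is robust: after collapsing $\alpha$ it becomes a $\Sigma^1_1$/$\Pi^1_1$ statement about a real coding $\alpha$, whose truth value is invariant under the choice of generic collapse and of ambient model. I would therefore run the entire computation of $\trianglelefteq_\alpha$ inside $L$ (collapsing $\alpha$ by a forcing over $L$) and conclude, via this invariance, that the resulting constructible relation agrees with the $V$-computed $\trianglelefteq_\alpha$; hence $\trianglelefteq_\alpha\in L$.

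Finally I would transfer the ordertype. Wellfoundedness is downward absolute and linearity of the quotient is first-order over $(\omega,\trianglelefteq_\alpha)$, so $L$ regards $\trianglelefteq_\alpha$ as a wellordering of a quotient of $\omega$. Its ordertype, computed in $L$, is then a countable-in-$L$ ordinal, i.e.\ strictly below $\omega_1^L$; and since the ordertype of a wellordering is absolute, that ordinal is precisely $CS(\alpha)$. This gives $CS(\alpha)<\omega_1^L$, as desired.

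The main obstacle I expect is the absoluteness transfer to $L$ when $\alpha$ is uncountable: one cannot code $\alpha$ by a real in $V$ and apply Shoenfield directly. The remedy is to collapse $\alpha$ to countable \emph{within each model} and lean on the generic-invariance already established in Lemma \ref{medabs}, which forces the two computations of $\trianglelefteq_\alpha$ to coincide and thereby places the relation (and so its ordertype) in $L$.
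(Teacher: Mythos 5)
Your proposal is correct and matches the paper's proof in essence: both rest on the observation that $L$ computes the $\le_s^*$-relevant data about $\alpha$ correctly (by running the Lemma \ref{medabs} absoluteness argument with collapses taken over each model, exactly as you describe), so that $CS(\alpha)=CS(\alpha)^L$, which is countable in $L$ since there are only countably many indices. Your routing through the preorder $\trianglelefteq_\alpha$ of Corollary \ref{thetaclub} is merely a repackaging --- the paper states the same conclusion directly as ``$L$ interprets `$e$ is an index witnessing $\alpha\ge_s^*\beta$' correctly, even if $\alpha,\beta$ are uncountable'' --- and your closing remark about the uncountable case correctly identifies the one genuine subtlety and its standard resolution.
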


\begin{proof} Similarly to the proof of Lemma \ref{medabs}, $L$ interprets ``$e$ is an index witnessing $\alpha\ge_s^*\beta$" correctly, even if $\alpha, \beta$ are uncountable. Hence $CS(\alpha)=CS(\alpha)^L$; the latter is a countable ordinal in $L$, hence $<\omega_1^L$.
\end{proof}

Under a mild set-theoretic hypothesis, the absoluteness of $\le_s^*$ gives the following bounds:

\begin{cor}\label{NoMedOm1} Assume $\omega_1^L<\omega_1$. Then:\begin{itemize}

\item There is a countable ordinal $\alpha$ such that $CS(\gamma)<\alpha$ for all $\gamma\in ON$.

\item There is a countable ordinal $\alpha$ such that for every set $A\subset ON$ which is an $\le_s^*$-chain, the transitive collapse of $A$ is $<\alpha$.

\item There is no embedding of $\omega_1$ into $\mathcal{D}_{s^*}^{ord}$ (and hence $\mathcal{D}_{s^*}^{ord}$ is not countably directed).

\end{itemize} 
\end{cor}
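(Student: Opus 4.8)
The plan is to treat the three items in increasing order of difficulty, with the third being the crux and the other two following from it together with Fact~\ref{thetasupmed}.

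The first bullet is immediate: by Fact~\ref{thetasupmed} we have $CS(\gamma)<\omega_1^L$ for every $\gamma\in ON$, and the hypothesis $\omega_1^L<\omega_1$ says exactly that $\omega_1^L$ is a countable ordinal, so $\alpha=\omega_1^L$ works.

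For the third bullet, suppose toward a contradiction that $\langle\delta_\xi:\xi<\omega_1\rangle$ is a strictly $\le_s^*$-increasing sequence of ordinals, i.e.\ an embedding of $\omega_1$ into $\mathcal{D}_{s^*}^{ord}$. The idea is to evaluate this sequence at the index $\omega_1^L$, which is legitimate precisely because the hypothesis guarantees $\omega_1^L<\omega_1$. Each $\delta_\eta$ with $\eta<\omega_1^L$ satisfies $\delta_\eta\le_s^*\delta_{\omega_1^L}$, and the $\delta_\eta$ are pairwise distinct (they lie in distinct degrees), so $\{\gamma:\gamma\le_s^*\delta_{\omega_1^L}\}$ contains at least $\omega_1^L$-many ordinals. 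Hence $CS(\delta_{\omega_1^L})\ge\omega_1^L$, contradicting Fact~\ref{thetasupmed}. I expect this to be the main obstacle --- not the estimate itself, which is short, but the realization that, since $\le_s^*$ is ill-founded on the ordinals (for instance the concatenation reductions give $\gamma\cdot 2^{n+1}\le_s\gamma\cdot 2^n$, plausibly strictly, so there is no hope of arguing via well-foundedness), the hypothesis must be used to locate a single bad index $\omega_1^L$ at which the collapsed-spectrum bound is violated. The ``not countably directed'' addendum then follows: $\mathcal{D}_{s^*}^{ord}$ has no maximal element, so countable directedness would let one recursively build a strictly increasing $\omega_1$-chain (taking upper bounds at limit stages and a strict successor at successor stages), which we have just excluded.

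For the second bullet I would first show that every $\le_s^*$-chain $A$ is countable, and then extract a uniform bound. Countability reduces to the third bullet: if $A$ were uncountable, then for each $\delta\in A$ the set $\{\gamma\in A:\gamma\le_s^*\delta\}\subseteq\{\gamma:\gamma\le_s^*\delta\}$ has order type $CS(\delta)<\omega_1^L$ and is therefore countable, so its complement $\{\gamma\in A:\delta<_s^*\gamma\}$ is co-countable in $A$; a transfinite recursion taking countable intersections of these co-countable sets then produces a strictly $\le_s^*$-increasing $\omega_1$-sequence inside $A$, contradicting the third bullet. Given countability, the transitive collapse of $A$ (its order type as a set of ordinals) is some countable ordinal, and to make the bound uniform I would split into cases: if $A$ has a $\le_s^*$-greatest element $\delta$ then $A\subseteq\{\gamma:\gamma\le_s^*\delta\}$ has order type $<\omega_1^L$; and if not, then $A$ has countable $\le_s^*$-cofinality (an uncountable cofinality would again yield an increasing $\omega_1$-sequence), so $A$ is a countable increasing union of sets each of order type $<\omega_1^L$, whence its order type is bounded by a single countable ordinal such as $\omega_1^L\cdot\omega$. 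The only fiddly point is the order-type arithmetic of this union --- because $\le_s^*$ can reverse the usual order of the ordinals, the pieces need not be usual-order initial segments of $A$ --- but any crude bookkeeping yields a fixed countable $\alpha$, which is all that is required.
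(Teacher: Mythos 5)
Your first bullet is exactly the paper's argument and is fine. In the third bullet, however, the key step --- ``the $\delta_\eta$ are pairwise distinct, so $\{\gamma:\gamma\le_s^*\delta_{\omega_1^L}\}$ contains at least $\omega_1^L$-many ordinals, hence $CS(\delta_{\omega_1^L})\ge\omega_1^L$'' --- conflates cardinality with order type. $CS$ is by definition the \emph{order type} of the predecessor set, and under the hypothesis $\omega_1^L$ is a countable ordinal, so ``$\omega_1^L$-many distinct ordinals'' guarantees only order type $\ge\omega$: the map $\eta\mapsto\delta_\eta$ need not be increasing as ordinals, precisely because (as you yourself note when discussing the second bullet) $\le_s^*$ can reverse the usual ordinal order. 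The paper supplies exactly the missing step: it sets $g(\gamma)=\min f(\gamma)$ and passes without loss of generality to an uncountable subsequence on which $g$ is increasing --- possible since $g$ is injective and there is no infinite descending sequence of ordinals, e.g.\ by Dushnik--Miller $\omega_1\rightarrow(\omega_1,\omega)^2$ --- after which $\{g(\eta):\eta\le\omega_1^L\}$ genuinely has order type $\ge\omega_1^L$ and Fact \ref{thetasupmed} is contradicted. With that patch your third bullet coincides with the paper's proof, and your parenthetical derivation of non-countable-directedness is fine.

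The second bullet has a more serious gap. Your reduction of countability of chains to the third bullet (co-countable sets of strict $\le_s^*$-successors, transfinite recursion of length $\omega_1$) is correct, and is genuinely different from the paper's route. But the final uniformity step fails: it is simply false that a countable increasing union of sets of ordinals, each of order type $<\mu$, has order type at most $\mu\cdot\omega$, or bounded by any fixed countable ordinal at all. Any countable set of ordinals $A=\{a_n:n<\omega\}$, of arbitrary countable order type, is the increasing union of the finite sets $\{a_0,\dots,a_n\}$, each of order type $<\omega$; and nothing in your argument rules out this kind of interleaving, since ordering a set by an arbitrary enumeration produces a total (pre)order all of whose initial segments are finite. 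So ``crude bookkeeping'' cannot close this: your decomposition yields only that each chain is countable, not a uniform $\alpha$, and the uniform bound is the entire content of the second bullet beyond the third. One must use more of the structure of $\le_s^*$ than the smallness of initial segments. The paper does this via Muchnik reducibility: given a chain of order type $\omega_1^L+1$ in the usual ordinal order, it thins so that $\alpha_0<\alpha_1$ in $A$ implies $\omega_1^{CK}(\alpha_0)<\alpha_1$; since $\le_s^*$ refines $\le_w$ and $\beta\le_w\alpha$ iff $\beta<\omega_1^{CK}(\alpha)$, on the thinned chain $\alpha_1\not\le_s^*\alpha_0$, so comparability forces the $\le_s^*$-order to agree with the ordinal order, whence all of $A$ below its maximum lies in $\{\gamma:\gamma\le_s^*\max(A)\}$ and $CS(\max(A))\ge\omega_1^L$, again contradicting Fact \ref{thetasupmed}. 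You would need some argument of this shape (aligning the two orders using the Muchnik constraint) in place of the union bookkeeping.
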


\begin{proof} The first follows directly from the fact above by taking $\alpha=\omega_1^L$. 

For the second, suppose $A$ were an $\le_s^*$-chain of ordertype $\omega_1^L+1$ in the usual ordering of $ON$. By thinning $A$ if necessary, we may assume that $\alpha_0,\alpha_1\in A,\alpha_0<\alpha_1\implies \omega_1^{CK}(\alpha_0)<\alpha_1$. But then $CS(\max(A))=\omega_1^L$, contradicting the fact above.

For the third, if $f:\omega_1\rightarrow\mathcal{D}_{s^*}^{ord}$ is increasing, let $g:\omega_1\rightarrow\omega_1$ be given by $g(\gamma)=\min(f(\gamma))$; without loss of generality, we may assume $g$ is increasing. But then $CS(g(\omega_1^L+1))\ge\omega_1^L$, a contradiction.
\end{proof}







\bigskip

The results above leave open, of course, the precise values of $\theta_{club}$ and $\theta_{sup}:=\sup\{CS(\alpha): \alpha\in ON\}$, even under large cardinals. The most conspicuous dangling thread, however, is the role of set theory in the proof of Corollary \ref{NoMedOm1} above. While the existence of large antichains is provable in \ZFC, it seems possible that the nonexistence of long chains may require additional hypotheses. As evidence for this, we observe that for reducibilities other than Medvedev --- even simple and natural ones --- \ZFC{} alone is indeed insufficient for the analogous result: 

Specifically, consider {\em hyper-Medvedev } reducibility, namely being uniformly $\Delta^1_1$-definable in every copy; write ``$\le_{sH}$" and ``$\mathcal{D}_{sH}^{ord}$" for the reducibility and the corresponding degree structure, respectively. This is a natural reducibility to consider in this context given the relationship between hyperarithmetic and computable in the context of ordinals; also, hyper-Medvedev reducibility is $\Pi^1_2$, so no more complex than Medvedev reducibility from a descriptive set theoretic standpoint. Finally, hyper-Medvedev reducibility is {\em structurally } well-behaved: it is easy to see that finite joins exist in $\mathcal{D}_{sH}^{ord}$, as follows. Example \ref{example} generalizes to show that $\lambda_0+\lambda_1\ge_{s}\lambda_1$ whenever $\lambda_0>\lambda_1$ are indecomposable. Now, if $\lambda_0>\lambda_1$ are indecomposable, then $a$ is in the $\lambda_0$-part of $\lambda_0+\lambda_1$ iff $[0, a]+\lambda_1<\lambda_0+\lambda_1$. Since we can compare ordinals in a uniformly $\Delta^1_1$ way, this means that $\lambda_0+\lambda_1\ge_{sH}\lambda_0$ whenever $\lambda_0>\lambda_1$ are indecomposable. Put together, this gives us that the $\le_{sH}$-join of ordinals is given by their natural (or Hessenberg) sum. However, this argument fails when we try to work with Medvedev reducibility instead, since we no longer obviously have $\lambda_0+\lambda_1\ge_s\lambda_1$, and it is not currently known whether finite joins exist in $\mathcal{D}_s^{ord}$.

So hyper-Medvedev is an attractive alternative to Medvedev reducibility, at least in the ordinals context. The connection with the results above is this. Theorem 8 of Hamkins, Linetsky, and Reitz \cite{HLR13} states that the set of ordinals $\alpha$ such that every element of $L_\alpha$ is definable in $L_\alpha$ without parameters is unbounded below $\omega_1^L$. It is not hard to show that if $\beta<\alpha$ is parameter-freely definable in $L_\alpha$ then $\beta\le_{sH}\alpha$. Then assuming $\omega_1^L=\omega_1$, the set of  $\alpha$ such that every element of $L_\alpha$ is parameter-freely definable in $L_\alpha$ forms a chain of ordertype $\omega_1$ in $\mathcal{D}_{sH}^{ord}$. Since hyper-Medvedev reducibility is $\Pi^1_2$, this means that there can be no coarse definability-theoretic proof from \ZFC{} alone that $\mathcal{D}_s^{ord}$ does not embed $\omega_1$.\footnote{The proof of Theorem 42 of \cite{HaLi}, which would contradict this observation, is flawed: it relies on a certain descending sequence of sets having nonempty intersection, which need not happen. The authors will correct this in a future draft.} So it is plausible that $\omega_1$ could embed into $\mathcal{D}_s^{ord}$ consistently in \ZFC.




\section{Absolute strong reducibilities} Several of the results of the previous section invoked set theoretic principles beyond \ZFC, either to bring a result from the ``generic" context down to apply to genuinely countable ordinals or to bring out combinatorial features which otherwise wouldn't exist (namely, the appropriate club dichotomy), and that section ended with an example of a natural strong reducibility, of the same complexity as Medvedev reducibility, which can consistently fail to have some of the global structural properties we hope that Medvedev reducibility might satisfy in \ZFC{} alone. This tells us that \ZFC{} alone will generally not suffice to give satisfying global descriptions of the degrees of countable ordinals under a natural strong reducibility notion.

On the other hand, our use of additional set-theoretic hypotheses was ``coarse": in general, arbitrary strong reducibilities can be tamed by appropriate large cardinal axioms. In this section we treat specifically the projectively-definable case, but it is clear how to extend to broader contexts under stronger hypotheses. The main value of these arguments is in the following section, where they will be used to provide characterizations of counterexamples to Vaught's conjecture.

\bigskip

By a {\em strong reducibility notion} we mean a subset $\mathfrak{R}$ of $\omega\times Struc\times Struc$; we interpret ``$\mathfrak{R}(e, \mathcal{A},\mathcal{B})$" as ``$e$ is an index for a reduction from $\mathcal{A}$ to $\mathcal{B}$," and say ``$\mathcal{A}\le_\mathfrak{R}\mathcal{B}$" if $\exists e(\mathfrak{R}(e, \mathcal{A},\mathcal{B}))$; sometimes we refer to $\le_\mathfrak{R}$ itself as the strong reducibility, when no confusion will arise (note however that strictly speaking $\le_\mathfrak{R}$ does not determine $\mathfrak{R}$).

Suppose $\mathfrak{R}\subseteq\omega\times Struc\times Struc$ is definable --- by which we shall mean, definable in the language of set theory with real parameters. Then there are two ``generic" versions of $\mathfrak{R}$, applicable to all structures (not just the countable ones):\begin{itemize}

\item $\mathfrak{R}^{*, +}(e, \mathcal{A},\mathcal{B})$ if for some generic extension $V[G]$, we have $V[G]\models\mathfrak{R}(e, \mathcal{A},\mathcal{B})$ (and in particular $\mathcal{A},\mathcal{B}$ are countable in $V[G]$). We denote the induced preorder by ``$\le_\mathfrak{R}^{*, +}$."

\item $\mathfrak{R}^{*, -}(e, \mathcal{A}, \mathcal{B})$ if for every generic extension $V[G]$ in which $\mathcal{A},\mathcal{B}$ are countable, we have $V[G]\models\mathfrak{R}(e, \mathcal{A}, \mathcal{B})$. We denote the induced preorder by ``$\le_\mathfrak{R}^{*, -}$."

\end{itemize}

(Note that these are really generic versions of {\em some fixed definition } of $\mathfrak{R}$; when we speak of a definable reducibility, we really mean a fixed definition of that reducibility.)

When $\mathfrak{R}$ is Medvedev reducibility, these coincide by Shoenfield absoluteness; but in general they need not (e.g. take $\mathfrak{R}$ to depend on the continuum hypothesis). However, an ameliorating theme in in descriptive set theory is that under large cardinal assumptions, ``reasonably definable" objects behave absolutely. At the most basic level, if $\mathfrak{R}$ is reasonably definable and appropriate large cardinal assumptions hold, we expect $\le_\mathfrak{R}^{*, +}$ and $\le_\mathfrak{R}^{*, -}$ to coincide; but more generally we expect reasonable sentences about such an $\mathfrak{R}$, such as basic properties of the corresponding degree structures, to be forcing-invariant.

So we will restrict our attention to ``absolute strong reducibilities." Here there is a slight issue --- it quickly becomes very tedious to state all theorems optimally. To facilitate things, we will adopt an excessively-broad definition of ``absolute strong reducibility," which will in turn require us to assume an excessively-powerful large cardinal axiom; we leave the straightforward improvements to the reader interested in minimizing set-theoretic assumptions. Following Montalb\'an \cite{Mon13}, we will look at {\em projective } definability.

\begin{defn}\label{strongdef} An {\em absolute strong reducibility} (ASR) $\mathfrak{R}$ is a subset of $\omega\times Struc\times Struc$ such that the following hold\begin{itemize}

\item $\mathfrak{R}$ is projectively definable (or rather, the predicate ``$a, b$ are codes for structures $\mathcal{A},\mathcal{B}$ and $\mathfrak{R}(e, \mathcal{A},\mathcal{B})$ holds" is projective in $e, a, b$).

\item {\bf (Injectivity) } For each $e$ and $\mathcal{B}$, there is at most one $\mathcal{A}$ with $\mathfrak{R}(e,\mathcal{A},\mathcal{B})$.

\item {\bf (Transitivity) } If $\mathcal{A}\le_\mathfrak{R}\mathcal{B}\le_\mathfrak{R}\mathcal{C}$, then $\mathcal{A}\le_\mathfrak{R}\mathcal{C}$.
\end{itemize}
(We could include more axioms, like reflexivity, but will have no need for it.).
\end{defn}

ASRs are generally well-behaved (and in particular deserve their name) assuming that large cardinals exist.

\begin{conv} For the rest of this article we assume the large cardinal hypothesis ``There is a proper class of Woodin cardinals." The relevant consequences of this hypothesis are the following:\begin{itemize}

\item The truth values of projective sentences with real parameters are not changed by forcing. In particular, $\mathfrak{R}^{*, +}=\mathfrak{R}^{*, -}$ for every ASR $\mathfrak{R}$.

\item The projective club dichotomy: every projectively definable set of countable ordinals contains or is disjoint from a club. (This is used exclusively in Theorem \ref{clubordasr}.)
\end{itemize}

We write ``ASR" instead of ``PSR" to emphasize that it is the absoluteness of $\mathfrak{R}$, not its projectiveness per se, that really matters. 
\end{conv}

\begin{rmrk} A proper class of Woodin cardinals is significant overkill for our purposes. This is visible already at the level of consistency strength, since the consistency strength of projective absoluteness for forcing extensions is equiconsistent with infinitely many strong cardinals (see Hauser \cite{Hau95}), and the projective club dichotomy is even weaker --- it is consistent relative to a single ineffable cardinal, which is a large cardinal property compatible with \VeL{} (see section $5$ of Harrington \cite{Ha78}). Additionally, the assumption of a proper class of Woodin cardinals in fact would allow us to consider not just {\em projective} strong reducibilities but strong reducibilities defined inside the inner model $L(\mathbb{R})$. This is a significantly broader class of reducibility notions, but is also more technical to define.
\end{rmrk}

For an arbitrary ASR $\mathfrak{R}$, we build a picture similar to that of Medvedev reducibility. We define $\mathcal{D}_\mathfrak{R}^{ord}$, $\mathcal{D}_{\mathfrak{R}*}^{ord}$, $\MM_\mathfrak{R}(\mathcal{A})$, and $CS_\mathfrak{R}(\mathcal{A})$ analagously to their Medvedev versions in Section 2, and show that the results of the previous sections hold for arbitrary ASRs.

A couple results generalize easily: 

\begin{thm} Let $\mathfrak{R}$ be an ASR. Then:\begin{enumerate}

\item There is a (least) $\theta_{sup}^\mathfrak{R}<\omega_1$ such that $CS_\mathfrak{R}(\alpha)<\theta_{sup}^\mathfrak{R}$ for all ordinals $\alpha$.

\item There is no embedding of $\omega_1$ into $\mathcal{D}_\mathfrak{R}^{ord}$.
\end{enumerate}
\end{thm}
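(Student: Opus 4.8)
The plan is to deduce (2) from (1) by exactly the argument used for the third bullet of Corollary \ref{NoMedOm1}, and to prove (1) by carrying over the proof of Fact \ref{thetasupmed} with the constructible universe $L$ replaced by a suitable projectively-correct inner model. So the real work is (1).

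For (1), recall that Fact \ref{thetasupmed} bounded $CS(\alpha)$ by $\omega_1^L$ from two inputs: that Medvedev reducibility of ordinals is absolute to $L$ (so $CS(\alpha)=CS(\alpha)^L$), and that by injectivity $\{\gamma:\gamma\le_s^*\alpha\}$ is countable in $L$, hence of ordertype $<\omega_1^L$. The only Medvedev-specific input is the first; injectivity is an axiom of ASRs (Definition \ref{strongdef}), and under our standing hypothesis $\omega_1^L<\omega_1$ holds automatically (indeed $\omega_1$ is inaccessible to reals). I would therefore replace $L$ by a proper-class inner model $W$ which (a) has $\omega_1^W<\omega_1$ and (b) computes $\mathfrak{R}^*$ correctly on all ordinals, i.e. for all $e$ and all $\alpha,\beta\in ON$, $W\models$``$\alpha\le_\mathfrak{R}^*\beta$ via $\Phi_e$'' iff $\alpha\le_\mathfrak{R}^*\beta$ via $\Phi_e$ in $V$ (here $\le_\mathfrak{R}^*$ is the common value of $\le_\mathfrak{R}^{*, +}$ and $\le_\mathfrak{R}^{*, -}$, which agree by the Woodin hypothesis). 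Granting such a $W$, the argument of Fact \ref{thetasupmed} goes through verbatim: by (b), $CS_\mathfrak{R}(\alpha)=CS_\mathfrak{R}(\alpha)^W$ for every $\alpha$; by Injectivity the map sending each $\gamma\le_\mathfrak{R}^*\alpha$ to a witnessing index is injective, so $\{\gamma:\gamma\le_\mathfrak{R}^*\alpha\}$ is countable (in $W$) and its ordertype is $<\omega_1^W$. Hence $\theta_{sup}^\mathfrak{R}:=\omega_1^W<\omega_1$ is a uniform bound, and we take the least such.

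The main obstacle is establishing (b): the absoluteness of a projective reducibility between $V$ and a small inner model, which is exactly where the large cardinal hypothesis is needed (for Medvedev this was free, by Shoenfield). The plan is first to observe, as in Lemma \ref{medabs}, that $\mathfrak{R}^*(e,\alpha,\beta)$ is equivalent to a projective statement about any pair of reals coding copies of $\alpha,\beta$ in a collapse extension: collapsing $\alpha,\beta$ to be countable turns ``$\mathfrak{R}(e,\mathcal{A},\mathcal{B})$'' into a projective predicate of the codes by Definition \ref{strongdef}, and by the Woodin hypothesis its truth value is independent of the collapse. It then suffices to take $W$ projectively correct with $\omega_1^W<\omega_1$; under a proper class of Woodins one may take $W=M_\omega$, the minimal iterable inner model with $\omega$ Woodin cardinals, which correctly computes projective truth (at every level and in all set-generic extensions) and which, being built from a single real, has $\omega_1^W<\omega_1$ since $\omega_1$ is inaccessible to reals. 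Evaluating the displayed projective predicate in a common generic collapse extension of $W$ and of $V$ and applying generic projective absoluteness on each side yields (b). This is the one place the full strength of the hypothesis is convenient; a reader minimizing assumptions can instead run the $M_n$-hierarchy level by level, matching the projective complexity of the particular $\mathfrak{R}$.

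Finally, for (2): suppose $f:\omega_1\to\mathcal{D}_\mathfrak{R}^{ord}$ were an order embedding and set $g(\gamma)=\min(f(\gamma))$, the least ordinal in the degree $f(\gamma)$. Distinct degrees are disjoint, so $g$ is injective, and thinning to a strictly increasing $\omega_1$-subsequence we may assume $g$ is increasing. For $\gamma<\delta$ we then have $g(\gamma)\le_\mathfrak{R}^* g(\delta)$, so $\{g(\gamma):\gamma\le\delta\}$ is a set of ordinals of ordertype $\delta+1$ contained in $\{\zeta:\zeta\le_\mathfrak{R}^* g(\delta)\}$; hence $CS_\mathfrak{R}(g(\delta))\ge\delta+1$. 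Taking $\delta=\theta_{sup}^\mathfrak{R}$, which is a legitimate index since $\theta_{sup}^\mathfrak{R}<\omega_1$ by (1), gives $CS_\mathfrak{R}(g(\theta_{sup}^\mathfrak{R}))\ge\theta_{sup}^\mathfrak{R}+1$, contradicting (1). This is precisely the argument of the third bullet of Corollary \ref{NoMedOm1}, now fed the uniform bound from (1) in place of $\omega_1^L$.
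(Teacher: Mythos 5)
Your part (2) and the overall skeleton (a uniform countable bound from injectivity-plus-absoluteness, then (2) by taking minima of degrees) match the paper, but your proof of (1) takes a genuinely different route --- the paper never leaves $V$ and its set-generic extensions, whereas you re-import the Fact \ref{thetasupmed} template by replacing $L$ with an inner model $W=M_\omega$ --- and your route has a genuine gap at exactly the step you label (b). To compare ordinals $\alpha,\beta$ that are uncountable in $V$, you must collapse them, and the resulting codes live in $W[G]$, \emph{not} in $W$; so what you need is that $W[G]$ is projectively correct in $V[G]$ for collapses of \emph{arbitrarily large} ordinals. Your justification --- ``applying generic projective absoluteness on each side'' --- fails on the $W$ side: the Woodin cardinals of $M_\omega$ form a bounded (indeed countable-in-$V$) set of ordinals, and generic projective absoluteness arguments require Woodins above the size of the forcing, so $M_\omega$ does not satisfy internal projective generic absoluteness for $Col(\omega,\alpha)$ once $\alpha$ exceeds its Woodins; for the same reason, the internal versions of $\le_\mathfrak{R}^{*,+}$ and $\le_\mathfrak{R}^{*,-}$ need not even coincide in $W$, so ``$W\models\alpha\le_\mathfrak{R}^*\beta$'' is ambiguous as written. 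Correctness of $M_\omega$ itself in $V[G]$ (which you cite) does not help, since the needed parameters are in $M_\omega[G]\setminus M_\omega$. The statement you want ($M_\omega[G]$ projectively correct in $V[G]$) may well be provable, but it requires serious inner-model-theoretic input (e.g.\ closure of $M_\omega[G]$ under the true $M_n^\#$-operators, via $S$-construction-style arguments), none of which your proposal supplies. Note that the paper itself flags this obstruction in the lead-in to Theorem \ref{clubordasr}: Lemma \ref{medabs}-style absoluteness between $V$ and small transitive models is precisely what is \emph{not} available for a general ASR, which is why the paper's later arguments avoid it.

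The paper's actual proof of (1) is much lighter and stays entirely inside the forcing-absoluteness granted by the standing hypothesis: each $CS_\mathfrak{R}(\alpha)$ is a countable ordinal of $V$ (by Injectivity, as you note), and pointwise the values $CS_\mathfrak{R}(\alpha)$ are frozen under forcing since $\mathfrak{R}$ is generically absolute; so after collapsing $\omega_1^V$, every value lies below the now-countable ordinal $\omega_1^V$, i.e.\ the extension satisfies ``$CS_\mathfrak{R}$ is bounded strictly below $\omega_1$,'' and this boundedness statement (rendered projectively, quantifying over reals coding countable ordinals) pulls back to $V$ by generic absoluteness --- no inner model, no iterability, no correctness-of-$W[G]$ is ever needed. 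Your argument for (2) from (1) is correct and is essentially identical to the paper's (and to the third bullet of Corollary \ref{NoMedOm1}), including the thinning of $g$ to an increasing map; so the repair needed is confined to (1): either supply the missing correctness lemma for $M_\omega[G]$, or abandon $W$ and run the collapse-of-$\omega_1^V$ argument directly in $V$.
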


\begin{proof} $(1)$ is proved via absoluteness as in Corollary \ref{NoMedOm1}. For each $\alpha\in Ord$, we have $CS_\mathfrak{R}(\alpha)<\omega_1$; since $\mathfrak{R}$ is absolute, after collapsing $\omega_1$ we have $CS_\mathfrak{R}(\alpha)<\omega_1^V$, hence is countable, so by absoluteness again we have $CS_\mathfrak{R}(\alpha)<\omega_1$ in the ground model. 

$(2)$ follows from $(1)$ as before: given an embedding of $\omega_1$ into $\mathcal{D}_\mathfrak{R}^{ord}$, we can find an $\alpha$ with $CS_\mathfrak{R}(\alpha)\ge\theta_{sup}$.
\end{proof}

By contrast, when we try to extend Theorem \ref{finafriggingly} we run into a slight problem --- the argument used absoluteness of the relevant properties between arbitrary transitive models (Lemma \ref{medabs}), while our large cardinal assumption only gives us much more restricted forms of absoluteness (e.g. between forcing extensions). Instead, we use the club dichotomy provided by our large cardinal assumption to find a sufficiently homogeneous set of countable ordinals, and then argue that it constitutes an antichain:

\begin{thm}\label{clubordasr} Let $\mathfrak{R}$ be an ASR. Then there is a club $A\subset\omega_1$ which is an $\mathfrak{R}$-antichain.
\end{thm}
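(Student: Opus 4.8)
The plan is to mimic the remark following Corollary~\ref{thetaclub}: build the antichain as the intersection of a club on which a suitable ``spectrum'' invariant is constant with a club of closure points of a sup-function, so that for $\alpha<\beta$ in the intersection the larger ordinal is too big to reduce to the smaller, while the smaller reducing to the larger would force a strict increase of the invariant. Two features of the Medvedev argument are unavailable for a general ASR and must be worked around. First, Lemma~\ref{medabs} gave absoluteness between arbitrary transitive models, which is how Corollary~\ref{thetaclub} produced a club of constancy; here I only have forcing-absoluteness, so instead I would extract the homogeneous club from the projective club dichotomy. Second, an ASR need not refine Muchnik reducibility and need not even be reflexive, so both the bound $\sup\{\gamma:\gamma\le_s\alpha\}\le\omega_1^{CK}(\alpha)$ and the use of ``$\beta\le_s\beta$'' are gone; these I replace by the countable predecessor property (injectivity) and a small modification of the spectrum.

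Concretely, write $\le_\mathfrak{R}^{*}$ for the common value of $\le_\mathfrak{R}^{*,+}$ and $\le_\mathfrak{R}^{*,-}$, and for a countable ordinal $\alpha$ set $P_\alpha=\{\gamma<\omega_1:\gamma\le_\mathfrak{R}^{*}\alpha\}$, $Q_\alpha=P_\alpha\cup\{\alpha\}$, and $q(\alpha)=\mathrm{ot}(Q_\alpha)$. For genuinely countable ordinals, $\gamma\le_\mathfrak{R}^{*}\alpha$ is just $\exists e\,\mathfrak{R}(e,\gamma,\alpha)$ by forcing-absoluteness, hence projective in codes for $\gamma,\alpha$; so for each countable $\delta$ the set $\{\alpha:q(\alpha)\ge\delta\}$ (``there is an order-embedding of $\delta$ into $Q_\alpha$'') is a projectively definable set of countable ordinals with a real parameter. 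By part~(1) of the preceding theorem $\mathrm{ot}(P_\alpha)=CS_\mathfrak{R}(\alpha)<\theta_{sup}^\mathfrak{R}<\omega_1$ uniformly, so $q(\alpha)\le CS_\mathfrak{R}(\alpha)+1$ is bounded below $\omega_1$ by some fixed $\theta<\omega_1$. For each $\delta<\theta$ apply the projective club dichotomy to $\{\alpha:q(\alpha)\ge\delta\}$; the thresholds $\delta$ for which this set contains a club form a downward-closed subset of $\theta$ with supremum $q_0$, and intersecting, over all $\delta<\theta$, the club witnessing the dichotomy (either $\{\alpha:q(\alpha)\ge\delta\}$ or its complement contains one) yields a club $C$ on which $q\equiv q_0$.

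Next let $s(\alpha)=\sup P_\alpha$, a function $\omega_1\to\omega_1$ (its values are countable by injectivity), and let $D$ be its club of closure points. Put $A=C\cap D$. For $\alpha<\beta$ in $A$ I claim incomparability. If $\beta\le_\mathfrak{R}\alpha$ then $\beta\in P_\alpha$, so $\beta\le s(\alpha)<\beta$ since $\beta$ is a closure point and $\alpha<\beta$ --- contradiction; thus $\alpha\not\ge_\mathfrak{R}\beta$. If instead $\alpha\le_\mathfrak{R}\beta$, then transitivity gives $P_\alpha\subseteq P_\beta$ and $\alpha\in P_\beta$, whence $Q_\alpha\subseteq Q_\beta$; moreover $\beta\in Q_\beta$ by construction and $\beta>\max(s(\alpha),\alpha)=\sup Q_\alpha$ (again using $\alpha<\beta$ and the closure-point property). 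Therefore $Q_\beta\supseteq Q_\alpha\cup\{\beta\}$ with $\beta$ strictly above $Q_\alpha$, so $q(\beta)=\mathrm{ot}(Q_\beta)\ge\mathrm{ot}(Q_\alpha)+1=q(\alpha)+1$, contradicting $q\equiv q_0$ on $A$; thus $\beta\not\ge_\mathfrak{R}\alpha$. Hence $A$ is a club $\mathfrak{R}$-antichain.

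The main obstacle is the hard direction $\alpha\not\le_\mathfrak{R}\beta$: there is no size obstruction (a priori the smaller ordinal could reduce to the larger), so the argument must run entirely through the spectrum invariant, and the absence of reflexivity means the naive witness ``$\beta\le_\mathfrak{R}\beta$'' is unavailable. Passing from $P_\alpha$ to $Q_\alpha=P_\alpha\cup\{\alpha\}$ is exactly the device that restores $\beta\in Q_\beta$ for free while keeping $q$ projective and uniformly bounded, so that the closure-point inequality $\beta>\sup Q_\alpha$ becomes a genuine ordertype jump. The remaining care is bookkeeping: verifying that $q(\alpha)\ge\delta$ really is projective in codes (so the club dichotomy applies) and that $\le_\mathfrak{R}^{*}$ agrees with $\le_\mathfrak{R}$ on countable ordinals, both of which follow from forcing-absoluteness of projective statements under our standing large cardinal hypothesis.
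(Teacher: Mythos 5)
Your proof is correct, and it shares the paper's skeleton: use the projective club dichotomy to freeze a spectrum-ordertype invariant on a club, thin to the closure points of the sup-of-spectrum function, and run the same two-sided incomparability argument (the larger ordinal exceeds $s(\alpha)$ so cannot reduce down; a reduction upward would strictly raise the invariant). Two genuine differences are worth recording. First, where you apply the dichotomy to the threshold sets $\{\alpha : q(\alpha)\ge\delta\}$ for $\delta$ below a bound $\theta$ --- so that your argument really does need the uniform bound $\theta_{sup}^{\mathfrak{R}}<\omega_1$ from part (1) of the theorem preceding Theorem \ref{clubordasr}, since otherwise you would be intersecting $\omega_1$-many clubs --- the paper instead applies it to the $\omega$-many index-pair sets $\{\alpha : O_\alpha(m,n)\}$, where $O_\alpha$ is the preorder on $\omega$ comparing the ordinal outputs of indices in $\MM_\mathfrak{R}(\alpha)$; that route needs no prior bound on $CS_\mathfrak{R}$, and in fact makes the whole indexed preorder, not merely its ordertype, constant on the club. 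Second, your replacement of $CS_\mathfrak{R}(\alpha)=\mathrm{ot}(P_\alpha)$ by $q(\alpha)=\mathrm{ot}(P_\alpha\cup\{\alpha\})$ is a genuine refinement at the strictness step. The paper's assertion that $\alpha\le_\mathfrak{R}\beta$ together with $\beta>S(\alpha)$ forces $CS(\alpha)<CS(\beta)$ is glossed: since Definition \ref{strongdef} deliberately omits reflexivity, the new element $\alpha$ of the spectrum of $\beta$ need not lie above $\sup P_\alpha$ (nothing prevents ordinals above $\alpha$ from reducing to $\alpha$), and inserting a point into the middle of a set of limit ordertype need not raise the ordertype --- with reflexivity the step is immediate, but for a general ASR it is exactly the gap you flagged. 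In your version the witness to strictness is $\beta$ itself, which lies in $Q_\beta$ by fiat and strictly above $\sup Q_\alpha$ by the closure-point property, giving $q(\beta)\ge q(\alpha)+1$ unconditionally. So your argument buys an airtight strictness step at the cost of importing the uniform bound, while the paper's buys independence from that bound at the cost of a strictness step that, as written, wants precisely the patch your $Q_\alpha$ supplies.
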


\begin{proof} Fix an ASR $\mathfrak{R}$. For $\alpha<\omega_1$, we define the binary relation $O_\alpha\subseteq \omega\times\omega$ as follows. $O_\alpha(c, d)$ holds iff one of the two possibilities holds:\begin{enumerate}

\item $c\not\in\MM_\mathfrak{R}(\alpha)$;

\item $c, d\in\MM_\mathfrak{R}(\alpha)$ and $\gamma_0<\gamma_1$ where $\mathfrak{R}(c,\gamma_0, \alpha)$ and $\mathfrak{R}(d, \gamma_1,\alpha)$.
\end{enumerate}

$O_\alpha$ is a preorder, and it is easy to check that its ordertype is exactly that of $CS_\mathfrak{R}(\alpha)$. Note that if $\beta\le_\mathfrak{R}\alpha$ then $CS_\mathfrak{R}(\beta)\le CS_\mathfrak{R}(\alpha)$. Both $(1)$ and $(2)$ are projective, hence by our large cardinal assumption for each $m,n\in\omega$ the set $\{\alpha\in\omega_1: O_\alpha(m,n)\}$ either contains or is disjoint from a club $C_{m, n}$. Let $D=\bigcap_{m, n\in\omega}C_{m, n}$. $D$ is a club, and for each $\gamma_0, \gamma_1\in D$ we have $CS(\gamma_0)=CS(\gamma_1)$. 

Let $S(\beta)=\sup\{\gamma: \gamma\le_\mathfrak{R}\beta\}$, and note that if $\beta\le_\mathfrak{R}\alpha$ and $\alpha>S(\beta)$ then $CS(\beta)<CS(\alpha)$ since $\alpha$ is bigger than every ordinal $\le_\mathfrak{R}\beta$. We now ``thin" $D$: let $$E=\{\delta\in D: \forall \epsilon<\delta, S(\epsilon)<\delta\}.$$ $E$ remains club --- in particular, $S(\eta)$ is always countable when $\eta$ is --- and so it is enough to show that $E$ is an $\le_\mathfrak{R}$-antichain.

Fix elements $\alpha<\beta$ of $E$. Then $\beta>S(\alpha)$, so $\beta\not\le_\mathfrak{R}\alpha$. Conversely, if $\alpha\le_\mathfrak{R}\beta$ then we would have $CS(\alpha)<CS(\beta)$ since $\beta>S(\alpha)$, but this contradicts the observation above about $D$. So $E$ is in fact an $\le_\mathfrak{R}$-antichain.
\end{proof}


\section{Counterexamples to Vaught's conjecture}

We now show that in fact the global picture provided above for ordinals can be significantly generalized, and in fact yields a characterization of counterexamples to Vaught's conjecture.

A key property we used implicitly throughout Section 2 was the fact that forcing adds no new ordinals. This property holds of counterexamples to Vaught's conjecture as well, in a sense studied by Knight, Montalb\'an, and the author \cite{KMS16} (and independently by Kaplan and Shelah \cite{KS16}):

\begin{defn} A {\em generically presentable structure} is a pair $(\nu,\mathbb{P})$ such that $\mathbb{P}$ is a forcing notion, $\nu$ is a $\mathbb{P}$-name, and the evaluation of $\nu$ in a generic extension $V[G]$ by $\mathbb{P}$ is independent of $G$, up to isomorphism, in the sense that $$\vdash_{\mathbb{P}^2}\nu[G]\cong\nu[H].$$
\end{defn}

All isomorphism-invariant data about possible evaluations of a generically presentable structure is already known by $V$ by definition. If $T$ is a counterexample to Vaught's conjecture, these are all the models there are in generic extensions:

\begin{prop}[Knight, Montalb\'an, S. \cite{KMS16}]\label{genpresVaught} Suppose $T$ is a counterexample to Vaught's conjecture. Then if $G$ is $\mathbb{P}$-generic and $M\in V[G]$ is a model of $T$, we have $M=\nu[G\cap \mathbb{P}_{\le p}]$ for some  generically presentable structure of the form $(\nu,\mathbb{P}_{\le p})$ for $p\in\mathbb{P}$.
\end{prop}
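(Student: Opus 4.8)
The plan is to reduce the statement to a density argument. Write $M = \tau[G]$ for some $\mathbb{P}$-name $\tau$, and fix $p_0 \in G$ forcing ``$\tau$ is a model of $T$''. Call $(\tau, \mathbb{P}_{\le p})$ \emph{stable} if $(p,p) \Vdash_{\mathbb{P}\times\mathbb{P}} \tau[\dot G_0] \cong \tau[\dot G_1]$ --- that is, the isomorphism type of $\tau[\cdot]$ is forced to be invariant below $p$ (equivalently, $(\tau,\mathbb{P}_{\le p})$ is generically presentable). I would show that
$$D = \{p \le p_0 : (\tau, \mathbb{P}_{\le p}) \text{ is stable}\}$$
is dense below $p_0$. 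Granting this, genericity of $G$ gives some $p \in G \cap D$; below $p$ the common isomorphism type of $\tau[\cdot]$ is captured by a genuine $\mathbb{P}_{\le p}$-name $\nu$ (a canonical copy on domain $\omega$), the pair $(\nu, \mathbb{P}_{\le p})$ is then generically presentable by construction, and $\nu[G\cap\mathbb{P}_{\le p}] \cong M$, as required. (The passage from the $\mathbb{P}$-name $\tau$ to the $\mathbb{P}_{\le p}$-name $\nu$ is routine, precisely because below $p$ the type is constant.)

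The content lies in the density of $D$, which I would establish by contradiction from the fact that a counterexample to Vaught's conjecture is \emph{scattered}: it has no perfect set of pairwise non-isomorphic countable models. Suppose $D$ were empty below some $p_1 \le p_0$. Then for every $p \le p_1$ the pair $(\tau, \mathbb{P}_{\le p})$ fails to be stable, so there are $q_0, q_1 \le p$ with $(q_0, q_1) \Vdash \tau[\dot G_0] \not\cong \tau[\dot G_1]$. Iterating this splitting, I would build a perfect tree of conditions $(p_s)_{s \in 2^{<\omega}}$ below $p_1$, with $p_{s0}, p_{s1} \le p_s$ and $(p_{s0}, p_{s1}) \Vdash \tau[\dot G_0] \not\cong \tau[\dot G_1]$ at each splitting node.

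To turn this tree into actual models I would pass to a forcing extension $V[K]$ in which $\mathbb{P}$ is countable. There, comeagerly many filters are $\mathbb{P}$-generic over $V$, and a fusion argument along the tree yields a perfect family $(G_x)_{x \in 2^\omega}$ of pairwise mutually $V$-generic filters with $p_{x\upharpoonright n} \in G_x$ for all $n$. If $x \ne y$ split at node $s$, then $(p_{s0}, p_{s1})$ forces non-isomorphism, so the models $M_x = \tau[G_x]$ constitute a perfect set of pairwise non-isomorphic models of $T$ in $V[K]$. Now ``there is a perfect set of pairwise non-isomorphic models of $T$'' is $\mathbf{\Sigma}^1_2$ in a real coding $T$ (for a fixed $\mathcal{L}_{\omega_1\omega}$-sentence satisfaction is Borel and non-isomorphism is $\mathbf{\Pi}^1_1$, so the matrix after the leading existential real quantifier is $\mathbf{\Pi}^1_1$), hence Shoenfield absoluteness reflects it from $V[K]$ down to $V$. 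This contradicts the scatteredness of $T$ in $V$, completing the argument. (Only Shoenfield absoluteness is used here; the standing Woodin-cardinal hypothesis is not needed for this proposition.)

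The step I expect to be the main obstacle is the extraction of the perfect family of pairwise mutually generic filters threading the tree. The delicate point is arranging genuine \emph{pairwise mutual} genericity of the $G_x$ --- not merely genericity of each one --- so that the conditions forcing non-isomorphism at splitting nodes really do apply to the resulting pairs of models; this is what the collapse-and-fusion step must secure. By contrast, the density reduction, the splitting construction, and the final Shoenfield reflection are all routine.
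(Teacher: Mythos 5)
Your proposal is correct and is essentially the argument the paper sketches (and that \cite{KMS16} carries out in full): if no condition below $p_0$ stabilizes the name, a splitting tree of conditions forcing non-isomorphism, followed by a collapse and a fusion producing pairwise mutually $V$-generic filters along the branches, yields a perfect set of pairwise non-isomorphic models of $T$ in a generic extension, and Shoenfield absoluteness of the $\Sigma^1_2$ statement ``there is a perfect set of pairwise non-isomorphic models of $T$'' reflects this to $V$, contradicting that counterexamples to Vaught's conjecture are scattered. Your identification of the pairwise mutual genericity of the $(G_x)_{x\in 2^\omega}$ as the technical core, and your observation that only Shoenfield (not the standing Woodin-cardinal hypothesis) is needed here, are both exactly right.
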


Briefly, this is because otherwise we can force to add a perfect set of non-isomorphic evaluations of $\nu$, and being a counterexample to Vaught's conjecture is absolute (see Morley \cite{Mor70}). When $T$ is a counterexample to Vaught's conjecture, we will write ``$GM(T)$" to denote the class of generically presentable models of $T$.

On the other side of things, we have theories with a perfect set of nonisomorphic models. For such theories the following construction will prove useful:

\begin{cons}\label{perfectcase} Suppose $P$ is a perfect set of reals coding pairwise nonisomorphic structures, and conflate $P$ with the real coding the underlying tree of $P$. We define an ASR $\mathfrak{W}_P$ by setting $\mathfrak{W}(e, \mathcal{M},\mathcal{N})$ iff one of the following holds:\begin{itemize}
\item $\mathcal{N}$ is coded by some real $f\in P$, and $\Phi_{e+1}^{f\oplus P}\cong \mathcal{M}$.
\item $e=0$ and $\mathcal{M}\cong\mathcal{N}$.
\end{itemize}

Note that the relation ``$b$ and $a$ are codes for structures $\mathcal{B},\mathcal{A}$ with $\mathfrak{W}_P(e, a, b)$" is $\Sigma^1_1$ in $e, a, b$ (with parameter $P$).
\end{cons}

We are now ready to give our first characterization of counterexamples to Vaught's conjecture:

\begin{thm}\label{firstchar} Suppose $T$ is a theory with uncountably many countable models. Then the following are equivalent:\begin{enumerate}

\item $T$ is a counterexample to Vaught's conjecture.

\item For each ASR $\mathfrak{R}$, there is some countable ordinal $\alpha$ such that for each model $\mathcal{M}$ of $T$, the transitive collapse of $\{\beta: \beta$ is an ordinal $\mathfrak{R}$-reducible to $\mathcal{M}\}$ (this is $CS_R(\mathcal{M})$) is $<\alpha$.

\item For each ASR $\mathfrak{R}$, there is no embedding of $\omega_1$ into the $\mathfrak{R}$-degrees of countable models of $T$.
\end{enumerate}
\end{thm}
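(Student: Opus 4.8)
The plan is to establish the two biconditionals $(1)\Leftrightarrow(2)$ and $(1)\Leftrightarrow(3)$ separately, rather than chaining $(1)\Rightarrow(2)\Rightarrow(3)\Rightarrow(1)$, because the link $(2)\Rightarrow(3)$ does not hold on the nose: a bound on the ordinal spectra $CS_\mathfrak{R}(\mathcal{M})$ does not by itself rule out a strictly $\le_\mathfrak{R}$-increasing $\omega_1$-chain of structures, since such a chain can climb indefinitely while the set of ordinals reducible to its members stays fixed (an increasing chain of ordinal-sets of constant ordertype can still grow by a shift). So I would prove four implications: the two ``tame'' directions $(1)\Rightarrow(2)$ and $(1)\Rightarrow(3)$, extracting boundedness and no-embedding from the counterexample hypothesis, and the two ``wild'' directions $\neg(1)\Rightarrow\neg(2)$ and $\neg(1)\Rightarrow\neg(3)$, manufacturing an ASR that witnesses failure from a perfect set of models.

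For the wild directions I would use Construction \ref{perfectcase}. Since $T$ has uncountably many countable models but, by $\neg(1)$, is not a counterexample to Vaught's conjecture, the Morley dichotomy---absolute under our hypotheses---yields a perfect set $P$ of pairwise non-isomorphic models of $T$, and I form the ASR $\mathfrak{W}_P$. Using standard perfect-tree coding (with $P$ as oracle one navigates the splitting nodes of $P$ to realize any prescribed real along a branch), I would build by transfinite recursion branches $f_\xi\in P$ for $\xi<\omega_1$ so that $f_\xi\oplus P$ uniformly computes each earlier $f_\zeta$ together with a copy of the ordinal $\xi$. Writing $\mathcal{N}_\xi$ for the model coded by $f_\xi$, the first clause gives $\mathcal{N}_\zeta\le_{\mathfrak{W}_P}\mathcal{N}_\xi$ for $\zeta<\xi$ and the second gives $CS_{\mathfrak{W}_P}(\mathcal{N}_\xi)\ge\xi$; the former produces a strictly increasing $\omega_1$-chain (witnessing $\neg(3)$) and the latter unbounded spectra (witnessing $\neg(2)$). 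The one delicate point is strictness: I would code strictly more at each stage (e.g.\ a jump) to force $f_\xi\oplus P\not\le_T f_\zeta\oplus P$, and invoke non-isomorphism of distinct branches together with the injectivity axiom to prevent the $e=0$ clause from collapsing the chain.

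For the tame directions the guiding principle is that generic presentability plays the role here that absoluteness of ordinals played in Section 2: by Proposition \ref{genpresVaught} every model of $T$ appearing in a generic extension is generically presentable, so its isomorphism type is forcing-invariant, exactly as an ordinal is. Concretely, I would argue that for a counterexample the isomorphism types of models are coded by ordinals inside a fixed inner model $L[P,T]$ (with $P$ the real parameter defining $\mathfrak{R}$)---the scattered Scott analysis of $T$ being absolute and assigning each model an ordinal index---so that $CS_\mathfrak{R}(\mathcal{M})$ can be evaluated inside $L[P,T]$ from that index. This yields the \emph{uniform} bound $CS_\mathfrak{R}(\mathcal{M})<\omega_1^{L[P,T]}$, and $\omega_1^{L[P,T]}<\omega_1$ under a proper class of Woodin cardinals, giving $(2)$. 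For $(3)$, the same absolute ordinal-coding converts a hypothetical $\omega_1$-embedding into the $\mathfrak{R}$-degrees of models of $T$ into an $\omega_1$-embedding into the $\mathfrak{R}'$-degrees of the corresponding ordinal codes, where $\mathfrak{R}'$ is the reducibility on ordinals induced by transporting $\mathfrak{R}$ along the coding; this contradicts the no-$\omega_1$-embedding clause of the ASR-for-ordinals theorem proved above.

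The main obstacle is exactly this uniformity. The naive move---computing $CS_\mathfrak{R}(\mathcal{M})$ in $L[P,m]$ for a real $m$ coding $\mathcal{M}$ and bounding it by $\omega_1^{L[P,m]}$---is useless, since that bound depends on $m$ and $\sup_m\omega_1^{L[P,m]}$ over all $\aleph_1$-many models of $T$ can equal $\omega_1$. The whole content is that for a \emph{counterexample} the isomorphism type is captured by an \emph{ordinal} in a single fixed $L[P,T]$ rather than by an arbitrary real, precisely as in the ordinal case, where each countable ordinal is coded parameter-freely even though an arbitrary copy of it need not lie in $L[P]$; this is what generic presentability buys and what a non-scattered theory lacks. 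Pinning down this absolute ordinal-coding of generically presentable models and verifying that the transported relation $\mathfrak{R}'$ is a genuine ASR (projective, injective, transitive) is the technical heart; the remainder is a transcription of the Section 2 arguments with ``model of $T$'' in place of ``ordinal.''
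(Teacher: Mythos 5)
Your overall decomposition (four implications: two ``tame'' from $(1)$, two ``wild'' from $\neg(1)$ via Construction \ref{perfectcase}) matches the paper exactly, and your wild directions are essentially the paper's. But both tame directions rest on a claim that is a genuine gap: that for a counterexample $T$ the isomorphism types of its models are ``coded by ordinals inside a fixed inner model $L[P,T]$'' in a way that lets you evaluate $CS_\mathfrak{R}(\mathcal{M})$ there. This fails for two independent reasons. First, evaluating a \emph{projective} $\mathfrak{R}$ inside $L[P,T]$ is unsound: Shoenfield gives only $\Sigma^1_2$ absoluteness, and the paper's large-cardinal hypothesis gives absoluteness between $V$ and its \emph{forcing extensions}, not between $V$ and inner models --- under a proper class of Woodin cardinals $L[P,T]$ is badly wrong about projective truth, so $CS_\mathfrak{R}$ computed there need not agree with the true value even for structures lying in $L[P,T]$. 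Second, and more fundamentally, most models of $T$ have no code in $L[P,T]$ at all: under these hypotheses $\omega_1^{L[P,T]}<\omega_1$, so $L[P,T]$ contains only countably many reals in $V$ while $T$ has $\aleph_1$ models, and the Morley tree as computed in $L[P,T]$ is only a fragment missing everything of rank $\ge\omega_1^{L[P,T]}$. What the scattered analysis actually provides is a definable \emph{countable-to-one} surjection $s:Mod(T)\to\omega_1$ (Morley-tree height); anything stronger --- a definable bijection, or a coding from which the isomorphism type (hence $CS_\mathfrak{R}$) is recoverable inside a fixed inner model --- is precisely the open Question raised in Section 5 of the paper. Generic presentability does not buy this: Proposition \ref{genpresVaught} says models of $T$ in generic extensions are evaluations of generically presentable structures over $V$, not that they are absorbed into $L[P,T]$.

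The paper's actual mechanism for $(1)\Rightarrow(2)$ replaces your inner model with a collapse-and-reflect argument: in $V$ each $\mathcal{M}\in GM(T)$ has $CS_\mathfrak{R}(\mathcal{M})<\omega_1$ (countably many indices plus injectivity); after collapsing $\omega_1^V$, \emph{every} model of $T$ in the extension comes from $GM(T)$ by Proposition \ref{genpresVaught} and $CS_\mathfrak{R}$ is forcing-invariant, so $CS_\mathfrak{R}$ is bounded by the now-countable ordinal $\omega_1^V$; since ``$CS_\mathfrak{R}$ is bounded strictly below $\omega_1$'' is projective, generic absoluteness pulls it back to $V$. For $(1)\Rightarrow(3)$ the paper needs only the countable-to-one $s$: it defines a mixed ASR $\mathfrak{W}$ relating the ordinal $s(\mathcal{C})$ to $\mathcal{B}$ whenever $\mathfrak{R}(e,\mathcal{C},\mathcal{B})$, thins a putative $\omega_1$-embedding so $s\circ f$ is increasing, and contradicts clause $(2)$ applied to $\mathfrak{W}$ --- so your transport idea survives in a weakened form needing neither a bijection nor inner-model evaluation, and your opening observation that $(2)$ for a single $\mathfrak{R}$ does not directly yield $(3)$ is exactly why the paper routes through an auxiliary ASR. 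One caveat on your wild direction: forcing $f_\xi\oplus P\not\le_T f_\zeta\oplus P$ by coding a jump does not prevent $\mathcal{N}_\xi\le_{\mathfrak{W}_P}\mathcal{N}_\zeta$, since the first clause of $\mathfrak{W}_P$ asks whether $f_\zeta\oplus P$ computes \emph{some copy} of $\mathcal{N}_\xi$, not the code $f_\xi$ itself; the correct repair is a counting argument (each $f_\zeta\oplus P$ computes copies of only countably many isomorphism types, while $P$ supplies continuum many, so choose $f_\xi$ coding an upper bound of the earlier data while avoiding the countably many bad types) --- though the paper's own phrasing is equally terse on this point.
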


\begin{proof} Construction \ref{perfectcase} provides $(3)\rightarrow (1)$  and $(2)\rightarrow(1)$. Supposing $T$ is not a counterexample to Vaught's conjecture, let $P$ be a perfect set of nonisomorphic models of $T$. Then the ASR $\mathfrak{W}_P$ given by Construction \ref{perfectcase} witnesses the failure of both $(3)$ and $(2)$:\begin{itemize}

\item  To show that $(3)$ fails, let $\{f_\eta:\eta<\omega_1\}\subseteq P$ such that $\{deg(f_\eta\oplus P):\eta<\omega_1\}$ is a strictly increasing sequence of Turing degrees; letting $\mathcal{M}_\eta$ be the structure coded by $f_\eta$ gives us an embedding of $\omega_1$ into the $\mathfrak{W}_P$-degrees of models of $T$.

\item To show that $(2)$ fails, note that if $\mathcal{A}\models T$ is coded by $r\in P$, then $\mathcal{A}$ is $\mathfrak{W}_P$-above every ordinal below the next admissible above $r\oplus P$, and these ordinals are cofinal in $\omega_1$.
\end{itemize}


\bigskip

For $(1)\rightarrow (2)$, we may compute in $V$ the ordinal $CS_\mathfrak{R}(\mathcal{M})$ for each $\mathcal{M} \in GM(T)$, and (since there are only $\omega$-many indices for $\mathfrak{R}$-reductions) we have $CS_\mathfrak{R}(\mathcal{M})<\omega_1$ for all such $\mathcal{M}$. Now consider a forcing extension in which $\omega_1^V$ is countable. This universe also satisfies $CS_\mathfrak{R}(\mathcal{M})<\omega_1^V$ for each model $\mathcal{M}$ of $T$, since by Proposition \ref{genpresVaught} all such models come from elements of $GM(T)$. But then this model satisfies ``$CS_\mathfrak{R}$ is bounded strictly below $\omega_1$;" by absoluteness, so does $V$.

\bigskip

Finally, suppose $(1)$ holds and $(3)$ fails as witnessed by $\mathfrak{R}$. By $(1)$, fix some appropriately-definable countable-to-one surjection $s$ from $Mod(T)$ onto $\omega_1$ --- for example, height in the Morley tree or (for interesting contrast) place in the Muchnik order modulo an appropriate real as per Theorem 1.V3 of  \cite{Mon13}. Now define an ASR $\mathfrak{W}$ by $\mathfrak{W}(e, \mathcal{A}, \mathcal{B})$ iff \begin{itemize}

\item $\mathcal{B}\models T$, $\mathcal{A}$ is a countable ordinal, and

\item $\mathfrak{R}(e, \mathcal{C}, \mathcal{B})$ for some $\mathcal{C}\models T$ with $s(\mathcal{C})=\mathcal{A}$. 
\end{itemize}

(Without loss of generality, the language of $T$ is disjoint from the fixed language for ordinals, so we dodge transitivity trivially.) Since $(1)$ holds, we know $(2)$ must hold, so let $\alpha$ be the countable ordinal so guaranteed in the case of the ASR $\mathfrak{W}$. Since $(3)$ fails as witnessed by $\mathfrak{R}$, let $f$ be an injection from $\omega_1$ to the $\mathfrak{R}$-degrees of models of $T$. Since $s$ is countable-to-one, we may assume that $x<y$ implies $s(f(x))<s(f(y))$. But this means that $CS_\mathfrak{W}(f(x))<CS_\mathfrak{W}(f(y))$ for all $x, y \in \omega_1$, and in particular for some $z \in \omega_1$ we must have $CS_\mathfrak{W}(f(z))>\alpha$. But this contradicts the assumption on $\alpha$.
\end{proof}

\begin{rmrk} We have treated strong reducibilities in a very abstract way. However, naturally-occurring strong reducibilities have additional structure: they tend to come from relations on {\em individual reals}, in the same way that Medvedev reducibility comes from Turing reducibility. This leads naturally to the notion of a {\em fine ASR}, which is an ASR $\mathfrak{R}$ satisfying $\mathfrak{R}(e, \mathcal{A},\mathcal{B})$ iff $\mathfrak{S}(e, a, b)$ for all reals $a, b$ coding $\mathcal{A},\mathcal{B}$ respectively, for some projective relation $\mathfrak{S}\subseteq \omega\times\mathbb{R}\times\mathbb{R}$. 

This means that there are two additional clauses we can add to Theorem \ref{firstchar}, by restricting clauses (2) and (3) to specifically fine ASRs. The theorem still holds: while restricting attention to fine ASRs would seem to make $(2)$ and $(3)$ weaker, the same proofs go through since the ASRs produced by Construction \ref{perfectcase} are fine.
\end{rmrk}


\section{Further questions}

While better calibrating the set-theoretic hypotheses for the results above is a natural direction for further research, its interest seems likely to be limited to set theory. We end by mentioning two directions for future research which we hope will prove more broadly fruitful.

\subsection{Behavior on clubs}

Conspicuously missing from Theorem \ref{firstchar} is anything resembling Theorem \ref{clubordasr}. In a sense, it appears both too strong and too weak to provide a direct characterization of counterexamples to Vaught's conjecture.

\bigskip

On the ``too strong" side, the natural approach to producing a characterization of counterexamples to Vaught's conjecture analogous to Theorem \ref{clubordasr} would be to pull back along a definable bijection between $Mod(T)$ and $\omega_1$ when $T$ is a counterexample to Vaught's conjecture. However, no such bijection is known to exist:

\begin{qus} Suppose $T$ is a counterexample to Vaught's conjecture, and assume appropriate large cardinals. Is there a ``reasonably definable" bijection between $Mod(T)$ and $\omega_1$?
\end{qus}

In lieu of a positive answer to this question, merely being a counterexample to Vaught's conjecture doesn't seem to provide enough power to use a countably closed (pseudo-)ultrafilter on $Mod(T)$ for our purposes.

\bigskip

Conversely, on the ``too weak" side there is a version of Corollary \ref{thetaclub} which holds in a broader context --- namely, the existence of $\theta_{club}$ --- but here it holds {\em too } broadly. Suppose reasonable large cardinals exist, $\mathbb{K}$ is a reasonably-definable set of structures, and $\mathcal{U}$ is a reasonably-definable countably closed filter on $\mathbb{K}$ satisfying the dichotomy principle for reasonably-definable subsets of $\mathbb{K}$. Then by the argument in the proof of Theorem \ref{clubordasr} up to the ``thinning" step, \ZFC{} + appropriate large cardinals proves that $CS(-)$ is constant on a $\mathcal{U}$-large subset of $\mathbb{K}$ with value some countable ordinal $\theta_{\mathcal{U}}^\mathbb{K}$.

While this means that Corollary \ref{thetaclub} seems irrelevant to counterexamples to Vaught's conjecture, it does raise an interesting problem on its own:

\begin{prob} Assuming large cardinals, calculate $\theta_\mathcal{U}^\mathbb{K}$ for natural $\mathcal{U},\mathbb{K}$.
\end{prob}

\subsection{Near misses}

Characterizations of counterexamples to Vaught's conjecture can sometimes be modified to provide notions of ``near-miss" theories: theories which can be proved to exist, but which satisfy weak ``anti-Vaught" properties which are still interesting. For example, Montalb\'an \cite{Mon13} showed that any theory satisfying ``hyperarithmetic-is-recursive" is a counterexample to Vaught's conjecture; a natural response is to look for theories satisfying bounded versions of ``hyperarithmetic-is-recursive," like ``arithmetic-is-recursive," and indeed a theory satisfying ``arithmetic-is-recursive" was recently constructed by Andrews, Harrison-Trainor, Miller, and the author \cite{AHMS}.

\bigskip

Our characterization, Theorem \ref{firstchar}, also admits a notion of ``near-miss." Examining its proof, we see that the strong reducibility witnessing a theory's failure to be a counterexample to Vaught's conjecture is $\Sigma^1_1$ --- or rather, the predicate ``$e$ is an $\mathfrak{W}_P$-index for a reduction of the structure coded by $b$ to the structure coded by $a$" is $\Sigma^1_1$ in $e, a, b$ (with a parameter for $P$). The obvious way to improve this to $\Delta^1_1$ would be to find a perfect set $P$ of codes for pairwise nonisomorphic models of $T$ such that every model of $T$ has a code in $P$ --- that is, a perfect transversal of $(Mod(T),\cong)$ --- but the existence of such a $P$ is a very strong condition on $T$.\footnote{This was explained to the author by Danielle Ulrich at https://mathoverflow.net/a/265615/8133.} If $P$ is a perfect transversal of $T$, it follows (Theorem 6.4.4 of \cite{Gao08}) that $(Mod(T), \cong)$ is smooth --- that is, there is a Borel function reducing $(Mod(T),\cong)$ to $(\mathbb{R}, =)$. So any theory with uncountably-many models whose isomorphism problem is nontrivial in the sense of Borel reducibility will not have a perfect transversal; for example, by Theorem 1.3. of \cite{Mar07} it follows that no {\em non-small } theory has a perfect transversal. (And ``perfect" here can be replaced with ``Borel.")

\bigskip

This failure to easily improve the complexity bound on the reducibility witnessing that $T$ is not a counterexample to Vaught's conjecture motivates the following notion:

\begin{defn} Let $\Gamma$ be a pointclass. A theory $T$ is {\em $\Gamma$-intermediate for strong reducibilities} if $T$ has uncountably many countable models up to isomorphism, but there is no strong reducibility $\le_R\in \Gamma$ such that $\omega_1$ embeds into the $\equiv_R$-degrees of models of $T$.
\end{defn}

The author suspects that $\Delta^0_\alpha$-intermediate theories for fixed $\alpha$ will be of limited interest and that $\Pi^1_1$-intermediate theories are unlikely to exist. However, the case of $\Delta^1_1$-intermediate theories seems particularly interesting:

\begin{qus} Are there theories (first-order or in $\mathcal{L}_{\omega_1\omega}$) which are $\Delta^1_1$-intermediate for strong reducibilities?
\end{qus}




\end{document}